\newcommand{\re}{\textnormal{Re}\,}
\renewcommand{\span}{\textnormal{span}\,}
\newcommand{\e}{\varepsilon}
\newcommand{\p}{\varphi}
\newcommand{\PP}{ { \R^2 \setminus\{ 0 \} } }
\newcommand{\PS}{ { \Rn \setminus\{ 0 \} } }
\newcommand{\R}{\mathbb{R}}
\newcommand{\C}{\mathbb{C}}
\newcommand{\Rn}{ {\mathbb{R}^n} }
\newcommand{\comment}[1]{}
\newtheorem{theorem}[equation]{Theorem}
\newtheorem{lemma}[equation]{Lemma}
\newtheorem{corollary}[equation]{Corollary}
\theoremstyle{definition}
\newtheorem{remark}[equation]{Remark}
\newtheorem{definition}[equation]{Definition}
\numberwithin{equation}{section}
\newcounter{minutes}\setcounter{minutes}{\time}
\newcounter{hours}\setcounter{hours}{\time}
\begin{document}
\psset{linewidth=1pt}

\begin{center}
{\Large \bf Local Convexity Properties of Quasihyperbolic Balls in Punctured Space}
\end{center}
\medskip

\begin{center}
{\large Riku Klén} \footnote{\noindent Department of Mathematics, University of Turku, FIN-20014, FINLAND\\
e-mail: riku.klen@utu.fi, phone: +358 2 333 6013, fax: +358 2 333 6595}
\end{center}
\bigskip

\begin{abstract}
  This paper deals with local convexity properties of the quasihyperbolic metric in the punctured space. We consider convexity and starlikeness of quasihyperbolic balls.

  \hspace{5mm}

  2000 Mathematics Subject Classification: 30C65

  Key words: quasihyperbolic ball, local convexity
\end{abstract}

\comment{
\bigskip
\begin{center}
\texttt{File:~\jobname .tex, 
         printed: \number\year-\number\month-\number\day,
         \thehours.\ifnum\theminutes<10{0}\fi\theminutes}
\end{center}
}

\section{Introduction}

The \emph{quasihyperbolic distance} between two points $x$ and $y$ in a proper subdomain $G$ of the Euclidean space $\Rn$, $n \ge 2$, is defined by
$$
  k_G(x,y) = \inf_{\alpha \in \Gamma_{xy}} \int_{\alpha}\frac{|dz|}{d(z,\partial G)},
$$
where $d(z,\partial G)$ is the (Euclidean) distance between the point $z \in G$ and the boundary of $G$ and $\Gamma_{xy}$ is the collection of all rectifiable curves in $G$ joining $x$ and $y$.

Since its introduction by F.W. Gehring and B.P. Palka \cite{gp} in 1976, the quasihyperbolic metric has been widely applied in geometric function theory and mathematical analysis in general, see e.g. \cite{vu3,v1}. Quasihyperbolic geometry has recently been studied by P. Hästö \cite{h} and H. Lindén \cite{l}.

The purpose of this paper is to study the metric space $(G,k_G)$ and especially local convexity properties of \emph{quasihyperbolic balls} $D_G(x,M)$ defined by
$$
  D_G(x,M) = \{ z\in G \colon k_G(x,z)<M \}.
$$
In the dimension $n=2$ we call these balls \emph{disks} and we often identify $\R^2$ with the complex plane $\C$.

M. Vuorinen suggested in \cite{vu4} a general question about the convexity of balls of small radii in metric spaces. Our work is motivated by this question and our main result Theorem \ref{mainthm} provides an answer in a particular case. For the definition of starlike domains see Definition \ref{defstarlikeness}.

\begin{theorem}\label{mainthm}
  1) For $x \in \PS$ the quasihyperbolic ball $D_\PS(x,M)$ is strictly convex for $M \in (0,1]$ and it is not convex for $M > 1$.

  \noindent 2) For $x \in \PS$ the quasihyperbolic ball $D_\PS(x,M)$ is strictly starlike with respect to $x$ for $M \in (0,\kappa]$ and it is not starlike with respect to $x$ for $M > \kappa$, where $\kappa$ is defined by (\ref{kappa}) and has a numerical approximation $\kappa \approx 2.83297$.
\end{theorem}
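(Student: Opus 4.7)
The metric $k_\PS$ is invariant under scaling and under rotations fixing $0$, so I will assume $x = e_1$. Passing to log-polar coordinates $(\rho,\phi) = (\log|y|,\angle(e_1,0,y))$ in the $2$-plane through $0,x,y$ turns $|dy|/|y|$ into the flat element $\sqrt{d\rho^2 + d\phi^2}$, so $\PS$-geodesics between such points are logarithmic spirals lying in that plane. Consequently the $2$-dimensional cross-section of $D_\PS(x,M)$ through any plane containing $0$ and $x$ is, in log-polar coordinates, just the Euclidean disk of radius $M$ about the origin, and the full $n$-dimensional ball is its solid of revolution about the axis through $0$ and $x$. Both convexity and starlikeness about $x$ therefore reduce to the corresponding $2$D questions.

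\medskip
\emph{Part 1.} I parametrize the $2$D boundary by $(\rho,\phi) = (M\cos\psi,M\sin\psi)$. Since the Euclidean metric on the cross-section is conformal to the log-polar metric with factor $e^{2\rho}$, the standard transformation rule for geodesic curvature under conformal change yields the inward Euclidean curvature
\[
  \kappa_E(\psi) \;=\; \frac{e^{-M\cos\psi}}{M}\bigl(1+M\cos\psi\bigr).
\]
This is nonnegative for every $\psi$ iff $M \le 1$, and at $M=1$ vanishes only at the isolated value $\psi=\pi$; hence the boundary carries no straight segment and $D_\PS(x,M)$ is strictly convex. For $M>1$ the curvature is negative on an arc near $\psi=\pi$, and I would verify non-convexity directly: the two boundary points at $\psi=\pi\pm\delta$ are mirror images across the axis, and an elementary Taylor expansion in $\delta$ shows that their Euclidean midpoint satisfies $k_\PS(x,\cdot)>M$ for every small $\delta>0$.

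\medskip
\emph{Part 2.} The $2$D ball is starlike about $x$ iff the map $\psi\mapsto\arg(y_\psi-x)$ is monotone on the boundary, because then each ray from $x$ meets $\partial D_\PS(x,M)$ exactly once. A direct computation yields
\[
  \frac{d}{d\psi}\arg(y_\psi - x) \;=\; \frac{M\,e^{M\cos\psi}}{|y_\psi-x|^2}\,G(\psi,M),\qquad G(\psi,M) := e^{M\cos\psi}\cos\psi - \cos(\psi+M\sin\psi),
\]
so starlikeness is equivalent to $G\ge 0$ on $[0,2\pi]$. Since $G(\psi,0)\equiv 0$ and $\partial_MG(\psi,0)\equiv 1$, one has $G>0$ for small $M>0$, and the critical $\kappa$ is the first $M>0$ at which $G$ reaches $0$; by minimality, at any vanishing point $\psi_0$ one also has $\partial_\psi G(\psi_0,\kappa)=0$. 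The factorization
\[
  \partial_\psi G = (1+M\cos\psi)\bigl[\sin(\psi+M\sin\psi)-e^{M\cos\psi}\sin\psi\bigr]
\]
reduces the analysis to two branches. The second bracket combined with $G=0$ (squaring and summing) forces $e^{2M\cos\psi}=1$, whence $M=\pi$ at $\psi=\pi/2$. The first factor combined with $G=0$ gives the implicit system
\[
  \cos\psi_0 = -\frac{1}{M},\qquad \cos(\psi_0+M\sin\psi_0)=-\frac{1}{Me},
\]
which will be the definition (\ref{kappa}) of $\kappa$.

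\medskip
The main obstacle is to show that this second branch produces the smaller root, so that the starlikeness threshold is indeed $\kappa<\pi$. Substituting $\cos\psi_0=-1/M$, I would study
\[
  H(M):=\cos\bigl(\acos(-1/M) + M\sqrt{1-1/M^2}\bigr) + \frac{1}{Me}
\]
on $[1,\pi]$, check that $H(1) = 1/e-1 < 0$ and $H(\pi)>0$, and conclude by the intermediate value theorem that $H$ has a zero $\kappa\in(1,\pi)$; numerical refinement yields $\kappa\approx 2.83297$. For $M\in(0,\kappa]$, $G\ge 0$ with only isolated zeros, so $\arg(y_\psi-x)$ is strictly monotone and $D_\PS(x,M)$ is strictly starlike; for $M>\kappa$, $G<0$ on an arc around $\psi_0$, producing a direction in which the ray from $x$ crosses the boundary at least three times, so the ball fails to be starlike.
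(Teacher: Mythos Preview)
Your proposal is correct and follows essentially the same route as the paper: after the same log-polar reduction, the paper parametrizes the boundary by $s=\log|y|\in[-M,M]$ whereas you use the log-polar angle $\psi$ (related by $s=M\cos\psi$), so your curvature factor $1+M\cos\psi$ is precisely the paper's factor $1+s$ in $c'(s)=-(1+s)M^2/(\varphi\,a^2)$, and your branch-(a) condition $\cos\psi_0=-1/M$ is exactly the paper's critical parameter $s=-1$, leading to the same defining equation for $\kappa$. The one place you are a bit looser than the paper is the uniqueness of $\kappa$ on $(1,\pi)$, which the paper verifies via $h(x)=\cos x+x\sin x-e^{-1}$, $h'(x)=x\cos x$; your concluding step that $G<0$ for $M>\kappa$ is left implicit to about the same degree as in the paper's argument.
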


Theorem \ref{mainthm} in the case $n = 2$ is illustrated in Figure \ref{fig1}. O. Martio and J. Väisälä \cite{mv} have recently proved that if $G$ is convex then $D_G(x,M)$ is also convex for all $x \in G$ and $M > 0$.

\begin{figure}[ht!]
  \begin{center}
    \includegraphics[width=5cm]{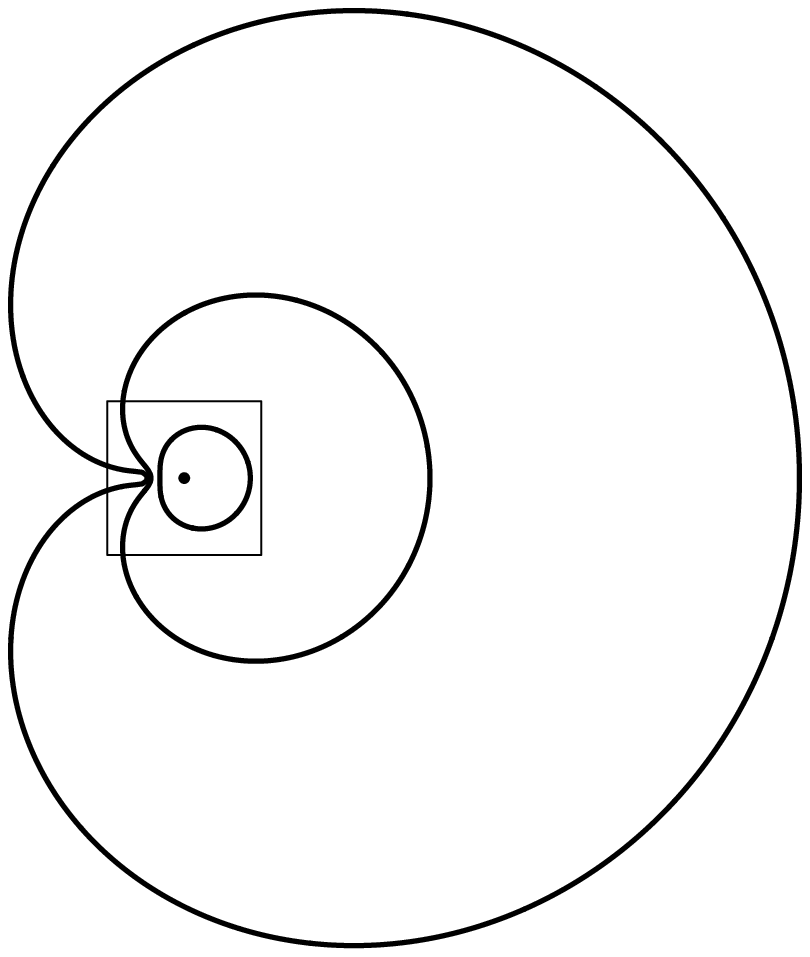}\hspace{1cm}
    \includegraphics[width=5cm]{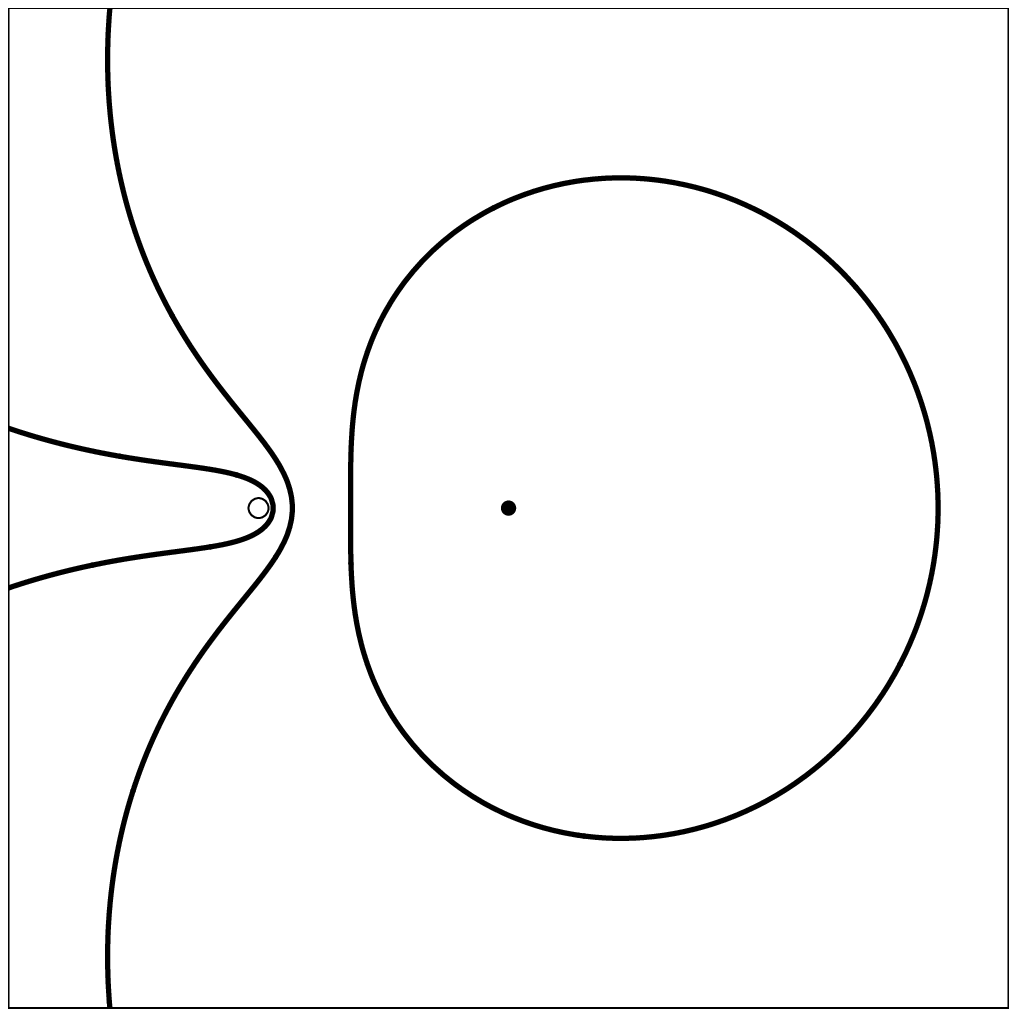}
    \caption{Boundaries of quasihyperbolic disks $D_\PP(x,M)$ with radii $M=1$, $M=2$ and $M=\kappa$.\label{fig1}}
  \end{center}
\end{figure}

\section{Quasihyperbolic balls with large and small radii}

In this section we consider the behavior of quasihyperbolic balls with large and small radii.

Let us define $\phi$-uniform domains, which were introduced by M. Vuorinen \cite[2.49]{vu2}, and consider quasihyperbolic balls with large radii in $\phi$-uniform domains. We use notation $m(a,b) = \min \{ d(a), d(b) \}$, where $d(x) = d(x,\partial G)$.

\begin{definition}
  Let $\phi \colon [0,\infty) \to [0,\infty)$ be a continuous and strictly increasing homeomorphism. Then a domain $G \subsetneq \Rn$ is \textit{$\phi$-uniform} if
  $$
    k_G(x,y) \le \phi \left( \frac{|x-y|}{m(x,y)} \right)
  $$
  for all $x,y \in G$.
\end{definition}

\begin{lemma}\label{phiproperty}
Fix $\phi$, let $G$ be $\phi$-uniform, $x_0 \in G$ and $M > 0$. If $x \in G$ with $m(x,x_0) > |x-x_0|/\phi^{-1}(M)$ then $x \in D_G(x_0,M)$.
\end{lemma}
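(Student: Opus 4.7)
The plan is to derive the conclusion directly from the defining inequality of a $\phi$-uniform domain, using the hypothesis essentially as an already-prepared bound on the ratio $|x-x_0|/m(x,x_0)$.

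First I would note that, because $\phi\colon[0,\infty)\to[0,\infty)$ is a continuous strictly increasing homeomorphism, its inverse $\phi^{-1}$ exists on $[0,\infty)$ and is itself strictly increasing, so the hypothesis
$$
  m(x,x_0) > \frac{|x-x_0|}{\phi^{-1}(M)}
$$
can be rewritten (both sides are positive, and $\phi^{-1}(M)>0$ since $M>0$) as
$$
  \frac{|x-x_0|}{m(x,x_0)} < \phi^{-1}(M).
$$

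Next, I would apply the monotone function $\phi$ to both sides of this inequality. Since $\phi$ is strictly increasing and $\phi(\phi^{-1}(M))=M$, this yields
$$
  \phi\!\left(\frac{|x-x_0|}{m(x,x_0)}\right) < M.
$$
The $\phi$-uniformity of $G$ gives
$$
  k_G(x_0,x) \le \phi\!\left(\frac{|x-x_0|}{m(x,x_0)}\right),
$$
and chaining these two inequalities shows $k_G(x_0,x) < M$, which is exactly the statement that $x \in D_G(x_0,M)$.

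There is no real obstacle here beyond bookkeeping: the argument is a direct chain of definitions, and the only point requiring a moment of care is that the inversion and monotone substitution are legitimate, which is guaranteed by the homeomorphism assumption on $\phi$. If anything needs to be spelled out, it is merely that strict inequality is preserved throughout, so that the strict containment $k_G(x_0,x)<M$ (rather than $\le M$) is obtained, matching the strict inequality in the definition of $D_G(x_0,M)$.
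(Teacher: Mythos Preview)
Your proposal is correct and follows essentially the same approach as the paper's proof: rewrite the hypothesis as a bound on the ratio $|x-x_0|/m(x,x_0)$, apply the strictly increasing $\phi$ to obtain $\phi\big(|x-x_0|/m(x,x_0)\big)<M$, and combine with the $\phi$-uniformity inequality to conclude $k_G(x_0,x)<M$. The only difference is that you spell out the justification for the inversion and the preservation of strict inequality, which the paper leaves implicit.
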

\begin{proof}
  Since $\phi$ is a homeomorphism $m(x,x_0) > |x-x_0|/\phi^{-1}(M)$ implies
  $$
    \phi \left( \frac{|x-x_0|}{m(x,x_0)} \right) < M
  $$
  and since $G$ is $\phi$-uniform
  $$
    k_G(x,x_0) \le \phi \left( \frac{|x-x_0|}{m(x,x_0)} \right) < M.
  $$
  Therefore $x \in D_G(x_0,M)$.
\end{proof}

\begin{definition}
  Let $\delta \in (0,1)$ and $r_0 > 0$ be fixed and $G \subset \Rn$ be a bounded domain. We say that $G$ satisfies the \textit{$(\delta,r_0)$-condition} if for all $z \in \partial G$ and $r \in (0,r_0]$ there exists $x \in B^n(z,r) \cap G$ such that $d(x) > \delta r$.
\end{definition}

\begin{theorem}\label{phideltar0}
  Assume $G$ is a bounded $\phi$-uniform domain and satisfies the $(\delta,r_0)$-condition for a fixed $\delta \in (0,1)$ and $r_0 > 0$. Let us assume $r_1 \in(0,r_0)$ and fix $x_0 \in G$ and $z \in \partial G$. Then $d \big( D_G(x_0,M),z \big) < r_1$ for
  \begin{equation}
    M > \phi \left( \frac{|x_0-z|+r_2}{\delta r_2} \right),\label{phidelta}
  \end{equation}
  where $r_2 = \min \{ r_1,d(x_0)/2 \}$.
\end{theorem}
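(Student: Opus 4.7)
The plan is to produce an explicit point of $D_G(x_0,M)$ that is closer than $r_1$ to $z$, by combining the $(\delta,r_0)$-condition with Lemma~\ref{phiproperty}.

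First, I would note that $r_2 = \min\{r_1, d(x_0)/2\}$ is strictly positive (since $x_0 \in G$) and satisfies $r_2 \le r_1 < r_0$, so the $(\delta,r_0)$-condition applies at the boundary point $z$ with radius $r_2$. This produces a point $x \in B^n(z, r_2) \cap G$ with $d(x) > \delta r_2$. This candidate $x$ already satisfies $|x-z| < r_2 \le r_1$, so the only remaining task is to verify $x \in D_G(x_0,M)$.

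Next I would invoke Lemma~\ref{phiproperty}, which reduces matters to checking the inequality $m(x,x_0) > |x-x_0|/\phi^{-1}(M)$. For the denominator side, I would bound $m(x,x_0)$ from below: by construction $d(x) > \delta r_2$, and by the definition of $r_2$ we have $d(x_0) \ge 2r_2 > \delta r_2$ (using $\delta < 1$), hence $m(x,x_0) > \delta r_2$. For the numerator, the triangle inequality gives $|x - x_0| \le |x - z| + |z - x_0| < r_2 + |x_0 - z|$. Combining these,
\[
  \frac{|x-x_0|}{m(x,x_0)} < \frac{|x_0-z|+r_2}{\delta r_2}.
\]

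Finally, because $\phi$ is a strictly increasing homeomorphism, applying $\phi$ preserves the inequality and hypothesis (\ref{phidelta}) yields $\phi(|x-x_0|/m(x,x_0)) < M$, i.e.\ $|x-x_0|/m(x,x_0) < \phi^{-1}(M)$, exactly the assumption of Lemma~\ref{phiproperty}. Therefore $x \in D_G(x_0,M)$ and $d(D_G(x_0,M), z) \le |x-z| < r_2 \le r_1$, as required. I do not expect any real obstacle here; the only subtlety is the bookkeeping showing $d(x_0) \ge 2r_2 > \delta r_2$, which is precisely why $r_2$ is defined as a minimum involving $d(x_0)/2$ rather than just $r_1$.
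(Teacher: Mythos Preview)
Your argument is correct and follows essentially the same route as the paper: pick $x$ via the $(\delta,r_0)$-condition at radius $r_2$, bound $m(x,x_0)$ below by $\delta r_2$, bound $|x-x_0|$ above via the triangle inequality, and feed the resulting estimate into Lemma~\ref{phiproperty}. The only cosmetic difference is that the paper observes $m(x_0,x)=d(x)$ (since $d(x)\le |x-z|<r_2\le d(x_0)/2$), whereas you bound both $d(x)$ and $d(x_0)$ separately; either way yields $m(x,x_0)>\delta r_2$.
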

\begin{proof}
  Since $G$ satisfies the $(\delta,r_0)$-condition and $r_2 < r_0$ we can choose $x \in B^n(z,r_2) \cap G$ with $d(x) > \delta r_2$. Now
  $$
    m(x_0,x) = \min \{ d(x_0),d(x) \} = d(x) > \delta r_2
  $$
  and $|z-x| < r_2$. The inequality (\ref{phidelta}) is equivalent to
  $$
    \delta r_2 > \frac{|x_0-z|+r_2}{\phi^{-1}(M)}.
  $$
  Since $|z-x| < r_2$ and by the triangle inequality
  $$
    \frac{|x_0-z|+r_2}{\phi^{-1}(M)} > \frac{|x_0-z|+|z-x|}{\phi^{-1}(M)} \ge \frac{|x_0-x|}{\phi^{-1}(M)}.
  $$
  Now we have
  $$
    m(x_0,x) > \delta r_2 > \frac{|x_0-z|+r_2}{\phi^{-1}(M)} > \frac{|x_0-x|}{\phi^{-1}(M)}
  $$
  and by Lemma \ref{phiproperty} we have $x \in G \cap D_G(x_0,M)$. Therefore
  $$
    d \big( D_G(x_0,M),z \big) \le |z-x| < r_2 \le r_1
  $$
  and the claim is clear.
\end{proof}

\begin{corollary}
  Let $G \subset \Rn$ be a bounded $\phi$-uniform domain and let $G$ satisfy the $(\delta,r_0)$-condition. For a fixed $s \in (0,r_0)$ and $x \in G$ there exists a number $M(s)$ such that
  $$
    \overline{G} \subset D_G \big( x,M(s) \big) + B^n(s) = \left\{ y+z \colon y \in D_G \big( x,M(s) \big) , \, |z| < s \right\}.
  $$
\end{corollary}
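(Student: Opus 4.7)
The plan is to split $\overline{G}$ into two pieces---an inner region where $\phi$-uniformity alone bounds $k_G(x,\cdot)$, and a thin boundary layer where Theorem \ref{phideltar0} supplies points of $D_G(x,M)$ close to the boundary---and then to take $M(s)$ large enough to handle both pieces simultaneously.

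First I would consider $A = \{ y \in \overline{G} \colon d(y,\partial G) \ge s/2 \}$. Any such $y$ lies in $G$, and since $G$ is bounded we have $|x-y| \le \mathrm{diam}(G)$ while $m(x,y) \ge \min\{d(x), s/2\} > 0$. The $\phi$-uniformity inequality then furnishes a bound uniform in $y \in A$,
$$
  k_G(x,y) \le \phi\!\left( \frac{\mathrm{diam}(G)}{\min\{d(x), s/2\}} \right) =: M_1,
$$
so any $M > M_1$ gives $A \subset D_G(x,M)$.

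Next I would treat the boundary layer $B = \overline{G} \setminus A$. Every $y \in B$ admits some $z \in \partial G$ with $|y-z| < s/2$, and I would invoke Theorem \ref{phideltar0} with $r_1 = s/2$ (permissible since $s < r_0$). With $x$ fixed, $r_2 = \min\{s/2, d(x)/2\}$ is independent of $z$, and $|x-z| \le \mathrm{diam}(G)$ is likewise bounded independently of $z \in \partial G$; hence the right-hand side of (\ref{phidelta}) is dominated by a single constant
$$
  M_2 := \phi\!\left( \frac{\mathrm{diam}(G)+r_2}{\delta r_2} \right).
$$
For $M > M_2$ the theorem yields $w \in D_G(x,M)$ with $|w-z| < s/2$, and the triangle inequality gives $|y-w| \le |y-z|+|z-w| < s$.

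Setting $M(s) := \max\{M_1, M_2\} + 1$ then ensures that every $y \in \overline{G}$ has a point of $D_G(x,M(s))$ within Euclidean distance less than $s$, which is precisely the claim $\overline{G} \subset D_G(x,M(s)) + B^n(s)$. The only mildly delicate step is verifying that the threshold coming out of Theorem \ref{phideltar0} can be taken uniformly in $z \in \partial G$; this reduces at once to the boundedness of $G$, so I do not foresee any substantive obstacle in executing this outline.
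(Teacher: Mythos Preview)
Your proposal is correct and follows the same underlying strategy as the paper---invoke Theorem~\ref{phideltar0} together with the boundedness of $G$ to get a uniform threshold. The paper's proof is much terser: it simply sets $r_1 = s$, takes
\[
  M(s) > \max_{z \in \partial G} \phi\!\left( \frac{|x-z|+r}{\delta r} \right), \qquad r = \min\{s,\, d(x)/2\},
\]
and says ``by Theorem~\ref{phideltar0} the assertion follows.'' Strictly read, that application only places every \emph{boundary} point of $G$ within distance $s$ of $D_G(x,M(s))$, and leaves the interior points to the reader. Your explicit two-region split (inner core via $\phi$-uniformity alone, $s/2$-boundary layer via Theorem~\ref{phideltar0} with $r_1 = s/2$ so that the triangle inequality closes) supplies exactly that missing detail. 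So your argument is not a genuinely different route, just a more carefully articulated version of the paper's, and the halving trick $r_1 = s/2$ is the right way to make the boundary-layer estimate land below $s$ rather than $2s$.
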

\begin{proof}
  We choose
  $$
    M(s) > \max_{z \in \partial G} \phi \left( \frac{|x-z|+r}{\delta r} \right),
  $$
  where $r = \min \{ s,d(x)/2 \}$. By Theorem \ref{phideltar0} the assertion follows.
\end{proof}

Let us then point out that quasihyperbolic balls of small radii become more and more like Euclidean balls when the radii tend to zero. We shall study the local structure of the boundary of a quasihyperbolic ball and show that the boundary is round from the inside and cannot have e.g. outwards directed conical parts.

\begin{definition}
  Let $\gamma$ be a curve in domain $G \subsetneq \Rn$. If
  $$
    k_G(x,y) + k_G(y,z) = k_G(x,z)
  $$
  for all $x,z \in \gamma$ and $y \in \gamma'$, where $\gamma'$ is the subcurve of $\gamma$ joining $x$ and $z$, then $\gamma$ is a \textit{geodesic segment} or briefly a \textit{geodesic}. We denote a geodesic between $x$ and $y$ by $J_k[x,y]$.
\end{definition}

\begin{theorem}
For a proper subdomain $G$ of $\Rn$, $M>0$ and $y\in
\partial D_G(x,M)$, let $J_k[x,y]$ be a geodesic segment of the
quasihyperbolic metric joining $x$ and $y$. For $z\in J_k[x,y]$ we
have
$$
  B^n\left(z,\frac{|z-y|}{1+u}\right) \subset D_G(x,M),
$$
where $u=|z-y|/d(z)$.
\end{theorem}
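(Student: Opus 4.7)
The plan is to combine three standard ingredients: the triangle inequality for $k_G$, the geodesic identity along $J_k[x,y]$, and the two one-sided estimates that control $k_G$ locally in terms of the Euclidean distance and the distance to the boundary.

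First, fix $w \in B^n\bigl(z,|z-y|/(1+u)\bigr)$ and write $r = |z-y|/(1+u) = d(z)u/(1+u)$. Observe that $r < d(z)$, so the Euclidean segment $[z,w]$ lies in $G$, and a routine calculation gives
$$
  \frac{d(z)}{d(z)-r} \;=\; 1+u.
$$
Using this segment as a candidate path and the pointwise bound $d\bigl(z+t(w-z)/|w-z|\bigr) \ge d(z)-t$, one obtains
$$
  k_G(z,w) \;\le\; \log\frac{d(z)}{d(z)-|z-w|} \;<\; \log\frac{d(z)}{d(z)-r} \;=\; \log(1+u).
$$

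Second, I would prove the matching lower bound $k_G(z,y) \ge \log(1+u)$. For any rectifiable curve $\alpha$ in $G$ joining $z$ to $y$, parametrized by arc length $s\in[0,\ell]$ with $\ell \ge |z-y|$, the triangle inequality applied to a nearest boundary point of $z$ gives $d(\alpha(s)) \le d(z)+s$, and hence
$$
  \int_\alpha \frac{|d\zeta|}{d(\zeta,\partial G)} \;\ge\; \int_0^\ell \frac{ds}{d(z)+s} \;=\; \log\frac{d(z)+\ell}{d(z)} \;\ge\; \log\left(1+\frac{|z-y|}{d(z)}\right) \;=\; \log(1+u).
$$
Taking the infimum over $\alpha$ yields the claim.

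Finally, combine everything: since $z$ lies on the geodesic $J_k[x,y]$ and $y\in\partial D_G(x,M)$,
$$
  k_G(x,z) + k_G(z,y) \;=\; k_G(x,y) \;=\; M.
$$
Then by the triangle inequality and the two estimates above,
$$
  k_G(x,w) \;\le\; k_G(x,z) + k_G(z,w) \;<\; k_G(x,z) + \log(1+u) \;\le\; k_G(x,z) + k_G(z,y) \;=\; M,
$$
so $w \in D_G(x,M)$, proving the inclusion.

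The heart of the argument is thus the coincidence of the two bounds at $\log(1+u)$; the main thing to be careful about is choosing the radius $r$ to be exactly $|z-y|/(1+u)$ so that the upper bound on $k_G(z,w)$ matches the lower bound on $k_G(z,y)$, and verifying $r<d(z)$ so that the straight-line estimate is legitimate. The case $z=y$ is trivial since the ball is empty.
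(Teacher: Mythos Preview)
Your proof is correct and follows essentially the same approach as the paper: both combine the geodesic identity and triangle inequality with the standard upper bound $k_G(z,w)\le\log\bigl(d(z)/(d(z)-|z-w|)\bigr)$ and the Gehring--Palka lower bound $k_G(z,y)\ge\log(1+u)$. The only difference is cosmetic: the paper cites these two estimates (from \cite{vu1} and \cite{gp}) and phrases the first as the inclusion $B^n\bigl(z,(1-e^{-k_G(z,y)})d(z)\bigr)\subset D_G\bigl(z,k_G(z,y)\bigr)$, whereas you prove both inline and compare the bounds directly at $\log(1+u)$.
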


\begin{proof}
  By \cite[Lemma 1]{go} there exists $J_k[x,y]$. By the choice of $z$ we have
  $$
    M = k_G(x,y) = k_G(x,z)+k_G(z,y)
  $$
  and by the triangle inequality for $w\in D_G\big(z,k_G(z,y)\big)$ we have
  $$
    k_G(x,w)\le k_G(x,z)+k_G(z,w) < M.
  $$
  Now
  $$
    D_G\big(z,k_G(z,y)\big) \subset D_G(x,M).
  $$
  By \cite[page 347]{vu1}
  $$
    B^n \left( z, \left( 1-e^{-k_G(z,y)} \right) d(z) \right) \subset D_G\big(z,k_G(z,y)\big)
  $$
  and therefore
  $$
    B^n \left( z, \left( 1-e^{-k_G(z,y)} \right) d(z) \right) \subset D_G(x,M).
  $$
  By \cite[Lemma 2.1]{gp} $k_G(z,y)\ge \log\left( 1+\frac{|z-y|}{d(z)} \right)$ and therefore
  \begin{eqnarray*}
    \left( 1-e^{-k_G(z,y)} \right) d(z) & \ge & \left(1-\frac{d(z)}{d(z)+|z-y|}\right)d(z)\\
    & = & \frac{|z-y|}{1+u}
  \end{eqnarray*}
  for $u=\frac{|z-y|}{d(z)}$. Now
  $$
    B^n\left(z,\frac{|z-y|}{1+u}\right) \subset B^n \left( z, \left( 1-e^{-k_G(z,y)} \right) d(z) \right)
  $$
  and the claim is clear.
\end{proof}

Now we have found a Euclidean ball $B^n(z,r)$ inside the quasihyperbolic ball $D_G(x,M)$ with the following property:
$$
  \frac{r}{d\big(z,\partial D_G(x,M)\big)} \rightarrow 1, \textrm{ when } z
  \rightarrow \partial D_G(x,M).
$$
Geometrically this convergence means that the boundary of the quasihyperbolic ball must be round from the interior. The boundary cannot have any cone shaped corners pointing outwards from the ball. However, there can be corners in the boundary pointing inwards to the ball. An example in $\PP$ is the quasihyperbolic disk with $M > \pi$. This example is considered in more detail in Remark \ref{notround}.

\begin{definition}\label{defstarlikeness}
  Let $G \subset \Rn$ be a domain and $x \in G$ . We say that $G$ is \textit{starlike with respect to $x$} if each line segment from $x$ to $y \in G$ is contained in $G$. The domain $G$ is \textit{strictly starlike with respect to $x$} for $x \in G$ if $G$ is bounded and each ray from $x$ meets $\partial G$ at exactly one point.
\end{definition}

The following result considers starlikeness of quasihyperbolic balls in starlike domains. The same result was independently obtained by J. Väisälä \cite{v2}.

\begin{theorem}\label{starshapedkinssd}
  If $G \subsetneq \Rn$ is a starlike domain with respect to $x$, then the quasihyperbolic ball $D_G(x,M)$ is starlike with respect to $x$.
\end{theorem}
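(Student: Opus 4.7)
The plan is to prove starlikeness by producing, for any $y \in D_G(x,M)$ and any $t \in (0,1)$, an explicit curve joining $x$ to the intermediate point $z = x + t(y-x)$ whose quasihyperbolic length does not exceed $k_G(x,y)$. The natural candidate is the radial contraction of an almost-geodesic from $x$ to $y$.

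After translating so that $x = 0$, I would fix $y \in D_G(0,M)$ and $t \in (0,1)$, set $z = ty$, and for $\e > 0$ choose a rectifiable curve $\alpha \colon [0,1] \to G$ from $0$ to $y$ with
$$
  \int_\alpha \frac{|dw|}{d(w,\partial G)} < k_G(0,y) + \e.
$$
Define $\beta(s) = t\,\alpha(s)$. Because $G$ is starlike with respect to $0$, the segment from $0$ to $\alpha(s)$ lies in $G$, so in particular $\beta(s) \in G$; hence $\beta$ is a rectifiable curve in $G$ joining $0$ to $z$.

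The key step is the pointwise distance estimate
$$
  d(t\,\alpha(s),\partial G) \ge t\, d(\alpha(s),\partial G).
$$
To see this, let $w = \alpha(s)$ and $r = d(w,\partial G)$, so that $B^n(w,r) \subset G$. For any $u \in B^n(0,r)$ the point $w + u$ lies in $G$, and starlikeness with respect to $0$ gives $t(w+u) = tw + tu \in G$. Thus $B^n(tw, tr) \subset G$, which yields the claimed inequality. Substituting into the length integral, and using $|\beta'(s)| = t|\alpha'(s)|$, I obtain
$$
  \int_\beta \frac{|dw|}{d(w,\partial G)}
  = \int_0^1 \frac{t|\alpha'(s)|}{d(t\,\alpha(s),\partial G)}\,ds
  \le \int_0^1 \frac{|\alpha'(s)|}{d(\alpha(s),\partial G)}\,ds
  < k_G(0,y) + \e.
$$
Therefore $k_G(0,z) \le k_G(0,y) + \e$, and letting $\e \to 0$ gives $k_G(x,z) \le k_G(x,y) < M$, so $z \in D_G(x,M)$.

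The only real obstacle is the distance inequality for scaled points in a starlike domain; once that observation is in hand, everything else is a one-line comparison of path integrals. No further regularity of $\alpha$ beyond rectifiability is needed, since the estimate is pointwise along the image of $\alpha$.
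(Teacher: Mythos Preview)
Your proof is correct and follows essentially the same route as the paper: after translating $x$ to the origin, you contract a near-optimal curve from $0$ to $y$ by the factor $t$ and invoke the key distance inequality $d(tw,\partial G)\ge t\,d(w,\partial G)$ coming from starlikeness. The only cosmetic difference is that the paper uses an actual geodesic (citing its existence), whereas you use an $\e$-almost-geodesic and pass to the limit; your version is slightly more self-contained, and your ball argument for the distance inequality spells out what the paper leaves as a one-line assertion.
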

\begin{proof}
  We need to show that the function $f(y) = k_G(x,y)$ is increasing along each ray from $x$ to $\partial G$. To simplify notation we may assume $x=0$.

  Let $y \in G \setminus \{ x \}$ be arbitrary and denote a geodesic segment from $x$ to $y$ by $\gamma$. Let us choose any $y' \in (x,y)$ and denote
  $$
    \gamma' = \frac{|y'|}{|y|}\gamma = c \gamma.
  $$
  Since $G$ is starlike with respect to $x$ the path $\gamma'$ from $x$ to $y'$ is in $G$. Therefore
  $$
    k_G(x,y') \le \int_{\gamma'}\frac{|dz|}{d(z)} = \int_{\gamma} \frac{c |dz|}{d(c z)}.
  $$
    Since $G$ is starlike with respect to $x$ we have $d(c z) \ge c d(z)$ which is equivalent to
  $$
    \frac{c}{d(c z)} \le \frac{1}{d(z)}.
  $$
  Now
  $$
    k_G(x,y') \le \int_\gamma \frac{c |dz|}{d(c z)} \le \int_\gamma \frac{|dz|}{d(z)} = k_G(x,y)
  $$
  and $f$ is increasing along each ray from $x$ to $\partial G$.
\end{proof}

For a domain $G \subset \Rn$ and quasihyperbolic ball $D_G(x,M)$, $x \in G$ and $M > 0$, we define \emph{the points that can affect the shape of $D_G(x,M)$} to be the set
$$
  \{ z \in \partial G \colon |z-y| = d(y) \textnormal{ for some } y \in D_G(x,M) \}.
$$

Let $G$ be a domain and fix $x \in G$ and $M > 0$. Now by \cite[page 347]{vu1} we know that $D_G(x,M) \subset B^n(x,Rd(x))$, for $R = e^M-1$, and therefore for each $y \in D_G(x,M)$ we have $d(y) \le d(x)+2Rd(x) = d(x)(2e^M-1)$. This fact is generalized in the following lemma.

\begin{lemma}\label{sslemma}
 Let $G \subsetneq \Rn$ be a domain, $x \in G$ and $y \in \partial G$. Then the points that can affect the shape of the quasihyperbolic ball $D_G(x,M)$ for $M \in (0,1]$ are in the closure of the set
  $$
    U_y = B^n \big( x,|x-y|(2e^M-1) \big) \setminus \left\{ z \in \Rn \setminus \{ y \} \colon \measuredangle{x' y z} \le \pi/2-1, x' = 2y-x \right\},
  $$
  where $\measuredangle{x' y z}$ is the angle between line segments $[x',y]$ and $[z,y]$ at $y$.
\end{lemma}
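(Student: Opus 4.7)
The plan is to fix an arbitrary affecting point $z$, choose a witness $w \in D_G(x,M)$ with $|w-z| = d(w)$, and then verify separately that $z$ lies in the ball $B^n(x,|x-y|(2e^M-1))$ and that $z$ avoids the cone removed from $U_y$. The engine is a comparison with the one-punctured domain $G_y := \R^n \setminus \{y\}$, whose quasihyperbolic metric admits an explicit closed form.

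For the ball part, the plan is to split $|z-x| \le |w-z| + |w-x|$ and apply two well-known bounds. The inclusion $D_G(x,M) \subset B^n(x,(e^M-1)d(x))$ from \cite[p.~347]{vu1} gives $|w-x| < (e^M-1)d(x)$, and the logarithmic consequence of \cite[Lemma 2.1]{gp}, namely $|\log(d(w)/d(x))| \le k_G(x,w)$, gives $d(w) < e^M d(x)$. Adding these and using $d(x) \le |x-y|$ yields $|z-x| < (2e^M-1)|x-y|$.

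For the cone part, first assume $z \ne y$; the goal is to show $\measuredangle x'yz > \pi/2 - 1$. Since $G \subset G_y$ and $d(\cdot,\partial G) \le |\cdot-y|$ on $G$, each length integral defining $k_G(x,w)$ dominates the corresponding integral for $k_{G_y}(x,w)$, so $k_{G_y}(x,w) \le k_G(x,w) < M \le 1$. The log-polar substitution $z - y = e^t \omega$ converts the metric $|dz|/|z-y|$ of $G_y$ into the flat product $dt^2 + g_{S^{n-1}}$ on $\R \times S^{n-1}$, giving the explicit formula
$$
k_{G_y}(x,w) = \sqrt{\bigl(\log(|w-y|/|x-y|)\bigr)^2 + \bigl(\measuredangle xyw\bigr)^2}.
$$
Reading off the angular component gives $\measuredangle xyw < 1$. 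Meanwhile the defining inequality $|w-z| = d(w) \le |w-y|$ rewrites as $(w-y)\cdot(z-y) \ge |z-y|^2/2 > 0$, forcing $\measuredangle wyz < \pi/2$. Applying the spherical triangle inequality at $y$ then yields $\measuredangle xyz \le \measuredangle xyw + \measuredangle wyz < 1 + \pi/2$, and since $x' - y = -(x-y)$, $\measuredangle x'yz = \pi - \measuredangle xyz > \pi/2 - 1$, placing $z$ outside the removed cone.

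The main obstacle I anticipate is giving a clean justification of the closed-form expression for $k_{G_y}$; once that is in place, the remainder is two triangle inequalities (one Euclidean for the ball, one spherical for the angle) together with an elementary perpendicular-bisector identity.
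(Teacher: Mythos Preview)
Your argument is correct and follows essentially the same route as the paper: compare with the one-punctured domain $G_y=\Rn\setminus\{y\}$, use the inclusion $D_G(x,M)\subset D_{G_y}(x,M)$ together with the standard radius bounds for the ball part, and use the explicit formula~(\ref{moeq}) for $k_{G_y}$ to bound $\measuredangle{xyw}<1$ for the cone part. Your write-up is in fact more explicit than the paper's at the key step---you spell out why the witness inequality $|w-z|=d(w)\le|w-y|$ forces $\measuredangle{wyz}<\pi/2$ and then combine the two angle bounds via the (spherical) triangle inequality at $y$, whereas the paper leaves that passage implicit.
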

\begin{proof}
  Let us consider $G' = \Rn \setminus \{ y \}$. Now $G \subset G'$ and therefore $D_G(x,M) \subset D_{G'}(x,M)$. Now the points that can affect the shape of $D_G(x,M)$ need to be inside $\overline{B^n \big( x,|x-y|(2e^M-1) \big) }$.

  Let $z \in \partial D_{G'}(x,M)$. Because $M \le 1$ we have by (\ref{moeq}) $\measuredangle{xyz} \le 1$. Therefore the points in
  $$
    \left\{ z \in \Rn \setminus \{ y \} \colon \measuredangle{x' y z} \le \pi/2-1, x' = 2y-x \right\}
  $$
  do not affect the shape of $D_{G'}(x,M)$. Since $D_G(x,M) \subset D_{G'}(x,M)$, the claim is clear.
\end{proof}

\begin{theorem}\label{sstheorem}
  For a domain $G \subsetneq \Rn$, $M \in (0,1]$ and $x \in G$ the quasihyperbolic ball $D_G(x,M)$ is starlike with respect to $x$.
\end{theorem}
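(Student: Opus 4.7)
The plan is to reduce the statement to Theorem~\ref{starshapedkinssd}, the starlike case, by substituting $G$ with an auxiliary starlike domain $G^*$ at $x$ that induces the same quasihyperbolic ball.

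First, let $A \subset \partial G$ be the set of points that can affect the shape of $D_G(x,M)$, as defined just before Lemma~\ref{sslemma}. By construction $d(w,\partial G) = d(w,A)$ for every $w \in D_G(x,M)$, so $D_G(x,M)$ depends on $\partial G$ only through $A$. For $M \le 1$, Lemma~\ref{sslemma} forces a sharp angular constraint: every $z \in A$ satisfies
\[
  \measuredangle{xyz} \le 1, \quad \text{equivalently} \quad \measuredangle{x'yz} \ge \frac{\pi}{2}-1 \quad (x' = 2y-x),
\]
for every $y \in \partial G$. This pinching of $A$ is the key geometric input.

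Next I would build $G^* = \Rn \setminus A^*$, where $A^* = \bigcup_{z \in A}\{x + s(z-x) \colon s \ge 1\}$ is the union of the outward rays from $x$ through the points of $A$. A direct verification shows that if $x + \tau(w-x) \in A^*$ for some $w \in G^*$ and $\tau \in (0,1]$, then $w = x + (s/\tau)(z-x)$ with $s/\tau \ge 1$ also lies in $A^*$, a contradiction; hence $G^*$ is starlike with respect to $x$. The heart of the argument is the two claims $D_G(x,M) \subset G^*$ and $d(w,\partial G^*) = d(w,\partial G)$ for every $w \in D_G(x,M)$, which together give $k_G = k_{G^*}$ on $D_G(x,M)$ and so $D_G(x,M) = D_{G^*}(x,M)$. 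Both reduce to excluding the scenario in which a point of $A^* \setminus A$, strictly outward along a ray past some $z \in A$, either sits in $D_G(x,M)$ or strictly beats $A$ in the distance to some $w \in D_G(x,M)$; in such a scenario, choosing $y \in \partial G$ near the offending ray point would force the corresponding $z$ to violate $\measuredangle{xyz} \le 1$.

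With $D_G(x,M) = D_{G^*}(x,M)$ established, Theorem~\ref{starshapedkinssd} applied to the starlike domain $G^*$ at $x$ yields the starlikeness of $D_{G^*}(x,M)$, which transfers to $D_G(x,M)$. The main obstacle is the geometric verification of the two compatibility claims above: the half-angle $\pi/2 - 1$ in Lemma~\ref{sslemma} is exactly what the hypothesis $M \le 1$ buys, so no slack is available, and the argument ruling out stray nearest points in $A^* \setminus A$ must be executed carefully using the angular pinching.
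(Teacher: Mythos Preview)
Your overall strategy---replace $G$ by an auxiliary domain that is starlike with respect to $x$ and carries the same quasihyperbolic ball, then invoke Theorem~\ref{starshapedkinssd}---is exactly the paper's approach. The paper uses $V_x = G \cap \bigcap_{y\in\partial G} U_y$ rather than your $G^* = \Rn\setminus A^*$, but the idea is the same.

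There is, however, a concrete error in how you read Lemma~\ref{sslemma}. Since $x,y,x'$ are collinear with $y$ between them, $\measuredangle{xyz}+\measuredangle{x'yz}=\pi$, so the condition $\measuredangle{x'yz}\ge \pi/2-1$ is equivalent to $\measuredangle{xyz}\le \pi/2+1$, \emph{not} to $\measuredangle{xyz}\le 1$. Your ``equivalently'' is false, and the stronger bound $\measuredangle{xyz}\le 1$ does not hold: take $G=\Rn\setminus\{y_1,y_2\}$ with $x,y_1,y_2$ almost collinear and $y_1$ between $x$ and $y_2$; both $y_i$ lie in $A$, yet $\measuredangle{xy_1y_2}$ is close to $\pi$. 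Since your sketch for the two compatibility claims explicitly relies on ``violating $\measuredangle{xyz}\le 1$'', the argument as written does not go through.

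The repair is to use the angle bound that actually sits behind Lemma~\ref{sslemma}, namely $\measuredangle{xzw}<M\le 1$ for every $w\in D_G(x,M)$ and every $z\in\partial G$; this is immediate from $G\subset\Rn\setminus\{z\}$ and \eqref{moeq}. With this bound the two claims fall out directly. For $D_G(x,M)\subset G^*$: if $w\in D_G(x,M)$ lay on a ray of $A^*$ beyond some $z\in A\subset\partial G$, then $z\in(x,w)$ and $\measuredangle{xzw}=\pi$, contradicting $\measuredangle{xzw}<1$. For $d(w,\partial G^*)=d(w,\partial G)$: since $\measuredangle{xzw}<1<\pi/2$, the foot of the perpendicular from $w$ to the line through $x$ and $z$ lies on the segment $[x,z]$, so every point $x+s(z-x)$ with $s>1$ is farther from $w$ than $z$ itself; hence $d(w,A^*)=d(w,A)=d(w,\partial G)$. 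No appeal to a bound on $\measuredangle{xyz}$ with $y,z\in\partial G$ is needed.
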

\begin{proof}
  We denote
  $$
    V_x = G \cap \left( \bigcap_{y \in \partial G} U_y \right).
  $$
  The set $\overline{V_x}$ contains all of the boundary points of $G$ that affect the shape of $D_G(x,M)$. Therefore for fixed $x \in G$ we have $D_G(x,M) = D_{V_x}(x,M)$ and $D_G(x,M)$ is starlike with respect to $x$ by Theorem \ref{starshapedkinssd}, because $V_x$ is starlike with respect to $x$.
\end{proof}

\begin{remark}
  In Lemma \ref{sslemma} and Theorem \ref{sstheorem} we could replace $M \in (0,1]$ by $M \in (0,\alpha]$ and $\measuredangle{x' y z} \le \pi/2-1$ by $\measuredangle{x' y z} \le \pi/2-\alpha$ for any $\alpha \in [1,\pi/2)$. This modified version of Theorem \ref{sstheorem} was also proved by J. Väisälä \cite[Theorem 3.11]{v1}.
\end{remark}

\section{Convexity of quasihyperbolic balls in punctured space}

The set $\Rn \setminus \{ z \}$, $z \in \Rn$, is called a punctured space. To simplify notation we may assume $z = 0$. In this section we will find values $M$ such that the quasihyperbolic ball $D_\PS(x,M)$ is convex for all $x \in \PS$.

Let us assume that $x,y \in \PS$ and that the angle $\p$ between segments $[0,x]$ and $[0,y]$ satisfies $0 < \p \le \pi$. It can be shown
\cite[page 38]{mo} that
\begin{equation}\label{moeq}
  k_\PS(x,y) = \sqrt{\p^2+\log^2\frac{|x|}{|y|}}.
\end{equation}

In particular, we see that $k_\PS(x,y) = k_\PS(x,y_1)$, where $y_1$ is obtained from $y$ by the inversion with respect to $S^{n-1}(|x|)$, i.e. $y_1 = y|x|^2/|y|^2$. Hence this inversion maps the quasihyperbolic sphere $\{ z\in \PS \colon k_\PS(x,z)=M \}$ onto itself.

Quasihyperbolic balls are similar in $\PS$ for fixed $M$. In other words any quasihyperbolic ball of radius $M$ can be mapped onto any other quasihyperbolic ball of radius $M$ by rotation and stretching.

We will first consider convexity of the quasihyperbolic disks in the punctured plane $\PP$ and then extend the results to the punctured space $\PS$.

By (\ref{moeq}) we have a coordinate representation in the case $n = 2$
\begin{equation}\label{equation}
  x = (|x| \cos{\p},|x| \sin{\p}) = \left(e^{\pm\sqrt{M^2-\p^2}} \cos{\p},e^{\pm\sqrt{M^2-\p^2}} \sin{\p}\right),
\end{equation}
for $x\in \partial D_\PP(1,M)$ and $-M \leq \p \leq M$. By using this presentation we will prove the following result.

\begin{theorem}\label{notconvexPP}
  For $M>1$ and $z \in \PP$ the quasihyperbolic disk $D_\PP(z,M)$ is not convex.
\end{theorem}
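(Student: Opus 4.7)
The plan is to use the similarity of quasihyperbolic disks in $\PP$ (noted in the discussion following (\ref{moeq})) to reduce to the canonical case $z = (1,0)$, and then to exhibit two symmetric boundary points whose midpoint falls outside the disk. By that similarity, it suffices to prove that $D_\PP((1,0), M)$ is not convex when $M > 1$.

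I would focus on the inner branch of the parameterization (\ref{equation}),
$$
  X(\p) = e^{-\sqrt{M^2 - \p^2}} \cos\p, \qquad Y(\p) = e^{-\sqrt{M^2 - \p^2}} \sin\p,
$$
near the point $(e^{-M}, 0)$ (corresponding to $\p = 0$), which is the boundary point closest to the puncture. Since $X$ is even and $Y$ is odd in $\p$, for every small $\e > 0$ the two points $(X(\e), Y(\e))$ and $(X(\e), -Y(\e))$ lie on $\partial D_\PP((1,0), M)$, and their midpoint is $(X(\e), 0)$ on the positive real axis.

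The key computation is a Taylor expansion at $\p = 0$; combining the expansions of $\sqrt{M^2 - \p^2}$, the exponential, and $\cos\p$ yields
$$
  X(\e) = e^{-M}\left(1 - \frac{M-1}{2M}\,\e^2\right) + O(\e^4).
$$
The sign of the quadratic coefficient flips at $M = 1$; for $M > 1$ we obtain $X(\e) < e^{-M}$ whenever $\e$ is small and nonzero. Since $(1,0)$ and $(X(\e),0)$ both lie on the positive real axis, formula (\ref{moeq}) gives $k_\PP((1,0),(X(\e),0)) = |\log X(\e)| > M$. Hence the midpoint lies outside $\overline{D_\PP((1,0), M)}$, and since the closure of an open convex planar set is convex, $D_\PP((1,0), M)$ itself is not convex.

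The main subtlety is the case where $M$ is only slightly greater than $1$: the inward dent in the boundary near $(e^{-M}, 0)$ is only second-order deep, and the Taylor expansion above is essential, because the sign of $M - 1$ is precisely what tips the local geometry from convex (at $M = 1$) to concave (for $M > 1$). All other values of $M$ are automatically handled by this local argument, since it suffices to produce a single pair of boundary points violating convexity.
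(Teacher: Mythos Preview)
Your proof is correct and follows essentially the same approach as the paper's. Both reduce to $z=1$ by similarity, both examine the inner branch of the boundary near $(e^{-M},0)$, and both detect non-convexity via the second-order behavior there: your coefficient $-\tfrac{M-1}{2M}e^{-M}$ is exactly $\tfrac12 f''(0)$ in the paper's notation, where the paper computes $f''(0)=e^{-M}(1/M-1)<0$. The paper additionally treats $M>\pi/2$ by a separate (and redundant) symmetry observation and records that $g(\p)=e^{-\sqrt{M^2-\p^2}}\sin\p$ is increasing, but the essential mechanism---the midpoint of the symmetric boundary points $(X(\e),\pm Y(\e))$ landing at $(X(\e),0)$ with $X(\e)<e^{-M}$---is the same.
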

\begin{proof}
  We may assume $z=1$ and let $x \in \partial D_\PP(z,M)$ be arbitrary. Assume $M>1$. By (\ref{equation}) we have
  $$
    x = \left(e^{\pm\sqrt{M^2-\p^2}} \cos{\p},e^{\pm\sqrt{M^2-\p^2}} \sin{\p}\right),
  $$
  where $-M \leq \p \leq M$.

  If $M > \pi/2$, then the claim is clear by symmetry because $\re x = e^{-M} > 0$ for $\p = 0$ and $\re x < 0$ for $\p = \pm M$.

  We will show that the function
  $$
    f(\p) = e^{-\sqrt{M^2-\p^2}}\cos{\p}
  $$
  is concave in the neighborhood of $\p = 0$ and the function
  $$
    g(\p) = e^{-\sqrt{M^2-\p^2}}\sin{\p}
  $$
  is increasing in $\big(0,\min\{M,\frac{\pi}{2}\}\big)$. This will imply non-convexity of $D_\PP(z,M)$.

  First,
  $$
    g'(\p) = e^{-\sqrt{M^2-\p^2}} \left( \cos{\p}+\frac{\p \sin{\p}}{\sqrt{M^2-\p^2}} \right)
  $$
  and this is clearly non-negative for $0 < \p < \min\{M,\frac{\pi}{2}\}$. Therefore $g(\p)$ is increasing.

  Second, by a straightforward computation we obtain
  $$
    f'(\p) = e^{-\sqrt{M^2-\p^2}} \left( \frac{\p \cos{\p}}{\sqrt{M^2-\p^2}}-\sin{\p} \right)
  $$
  and
  $$
    f''(\p) = \frac{e^{-\sqrt{M^2-\p^2}} \big( \big( M^2-\sqrt{M^2-\p^2}(M^2-2\p^2) \big) \big)\cos \p+2\p(\p^2-M^2)\sin \p}{(\sqrt{M^2-\p^2})^3}.
  $$
  Now $f'(0) = 0$ and $f''(0) = e^{-M}(1/M-1) < 0$ and therefore $f(\p)$ is concave in the neighborhood of $\p = 0$.
\end{proof}

Theorem \ref{notconvexPP} can easily be extended to the case $n \ge 3$.

\begin{corollary}\label{notconvexPS}
  If $M>1$ and $z \in \PS$, then the quasihyperbolic ball $D_{\PS}(z,M)$ is not convex.
\end{corollary}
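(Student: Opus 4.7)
The plan is to reduce the claim to the planar case, which is Theorem \ref{notconvexPP}. Accordingly I assume $n \ge 3$, fix $z \in \PS$, and choose any $2$-dimensional linear subspace $P \subset \Rn$ containing $z$. A Euclidean isometry sending $P$ to $\R^2$ with $0 \mapsto 0$ identifies $P \setminus \{0\}$ with the punctured plane $\PP$, and sends $z$ to a point we still call $z$.

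The crucial observation --- and essentially the only thing that needs checking --- is that for any $x, y \in P \setminus \{0\}$ one has $k_{\PS}(x,y) = k_{\PP}(x,y)$ under this identification. This is immediate from formula (\ref{moeq}): the expression $\sqrt{\varphi^2 + \log^2(|x|/|y|)}$ depends only on the angle $\varphi$ between $[0,x]$ and $[0,y]$ and on the ratio $|x|/|y|$, quantities that are intrinsic to the plane $P$ and therefore identical whether one computes in $\PS$ or in $\PP$. Consequently the slice $D_\PS(z,M) \cap P$ coincides with $D_\PP(z,M)$ under the identification.

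From here the conclusion follows in one step. By Theorem \ref{notconvexPP} the planar disk $D_\PP(z,M)$ is not convex when $M > 1$, so there exist $x, y \in D_\PP(z,M)$ and a point $w$ on the segment $[x,y]$ lying outside $D_\PP(z,M)$. Since $P$ is a linear subspace, the entire segment $[x,y]$ remains in $P$, so transporting back through the identification produces $x, y \in D_\PS(z,M)$ and $w \in [x,y] \setminus D_\PS(z,M)$, contradicting convexity of $D_\PS(z,M)$.

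The only potentially subtle point is the equality $k_{\PS}|_P = k_\PP$, since a priori a quasihyperbolic geodesic in $\PS$ could leave the plane $P$ and shorten the distance. The closed form (\ref{moeq}) sidesteps this entirely by giving the same dimension-free expression, so this apparent difficulty disappears on inspection and no further computation is needed beyond invoking the planar theorem.
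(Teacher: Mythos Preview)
Your proof is correct and follows essentially the same approach as the paper: intersect the ball with a $2$-plane through $0$ and $z$, identify that slice with a planar quasihyperbolic disk via the dimension-free formula~(\ref{moeq}), and invoke Theorem~\ref{notconvexPP}. You are in fact more explicit than the paper about why $k_{\PS}|_P = k_{\PP}$ and why a non-convex planar slice forces non-convexity of the ambient ball.
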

\begin{proof}
  Let us choose any $y \in \PS$ such that $y \neq t \, z$ for all $t \in \R$. Now $D_{\PS}(z,M) \cap \span (0,y,z)$ is not convex by Theorem \ref{notconvexPP} and therefore the quasihyperbolic ball $D_{\PS}(z,M)$ cannot be convex.
\end{proof}

Let us now consider the convexity of the quasihyperbolic balls in the case $M \leq 1$ and $n = 2$.

\begin{theorem}\label{convexPP}
  For $0 < M \leq 1$ and $z \in \PP$ the quasihyperbolic disk $D_\PP(z,M)$ is strictly convex.
\end{theorem}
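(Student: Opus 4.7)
The plan is to exploit the observation, implicit in (\ref{moeq}), that the principal branch of the logarithm is a local isometry from $(\PP,k_\PP)$ to Euclidean $\C$: formula (\ref{moeq}) yields $k_\PP(1,e^w)=|w|$ whenever $|\im w|\le\pi$. Using the similarity noted after (\ref{moeq}) we may assume $z=1$, and then since $M\le 1<\pi$ the map $\exp$ carries $B^2(0,M)\subset\C$ bijectively onto $D_\PP(1,M)$; in particular
\[
\gamma(t)=\exp\bigl(Me^{it}\bigr),\qquad t\in[0,2\pi),
\]
is a smooth, simple, counterclockwise parametrization of $\partial D_\PP(1,M)$.

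I would then compute the signed curvature of $\gamma$ directly in complex notation. Differentiating gives $\gamma'(t)=iMe^{it}\gamma(t)$ and $\gamma''(t)=-Me^{it}\bigl(1+Me^{it}\bigr)\gamma(t)$, and a short calculation produces
\[
\kappa(t)=\frac{\im\bigl(\overline{\gamma'(t)}\,\gamma''(t)\bigr)}{|\gamma'(t)|^3}=\frac{1+M\cos t}{M\,|\gamma(t)|}.
\]
Since $M\le 1$, we have $\kappa(t)\ge 0$ everywhere, with equality only in the borderline case $M=1$, $t=\pi$.

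For $M<1$ strict positivity of $\kappa$ makes $\gamma$ a simple closed curve of strictly positive curvature, which therefore bounds a strictly convex region. For $M=1$ the curvature has a single isolated zero at $t=\pi$; the turning angle $\arg\gamma'(t)$ is still strictly increasing in $t$ (no interval of constant direction, since $\kappa>0$ on a dense set), so $\gamma$ contains no line segment, and the enclosed region remains strictly convex. The main obstacle is carrying out the curvature computation cleanly and verifying the key sign identity $\kappa(t)=(1+M\cos t)/(M|\gamma(t)|)$; once this is in hand, the dichotomy at the threshold $M=1$ is immediate and matches the non-convexity for $M>1$ already proved in Theorem \ref{notconvexPP}.
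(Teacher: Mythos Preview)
Your argument is correct, and the curvature identity checks out: with $\gamma'(t)=iMe^{it}\gamma(t)$ and $\gamma''(t)=-Me^{it}(1+Me^{it})\gamma(t)$ one gets $\overline{\gamma'}\gamma''=iM^2|\gamma|^2(1+Me^{it})$, hence $\im(\overline{\gamma'}\gamma'')=M^2|\gamma|^2(1+M\cos t)$ and $|\gamma'|=M|\gamma|$, giving exactly your formula. The $M=1$ endpoint is handled properly: the tangent angle $\theta(t)=\int_0^t\kappa|\gamma'|\,d\tau$ is strictly increasing because the integrand is nonnegative with only an isolated zero, so no subarc is a line segment.

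The paper takes a closely related but computationally different route. Instead of the angular parameter $t$, it parametrizes the upper half of the boundary by $s=\log|x(s)|\in[-M,M]$ with $\p(s)=\sqrt{M^2-s^2}$, writes the tangent vector as $t(s)=\big(a(s),b(s)\big)$, and shows the slope $c(s)=b(s)/a(s)$ is strictly decreasing via
\[
c'(s)=-\frac{(1+s)M^2}{\p(s)\,a(s)^2}.
\]
Your parameter and the paper's are linked by $s=M\cos t$, so the decisive factor $1+M\cos t$ in your curvature is literally the factor $1+s$ in the paper's derivative; both proofs collapse to the same sign condition. What your approach buys is a cleaner, coordinate-free computation via complex differentiation and an immediate global statement (curvature on the whole closed curve), whereas the paper's approach stays in real coordinates, treats only the upper half by symmetry, and argues monotonicity of $\arg t(s)$ through the slope rather than through curvature. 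The underlying geometry is the same; your packaging is more efficient.
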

\begin{proof}
  Let $z=1$ and $x \in \partial D_\PP(z,M)$. By symmetry it is sufficient to consider the upper half $D$ of $\partial D_\PP(z,M)$, which is given by
  \begin{equation}\label{funcxs}
    x = x (s) = (e^s \cos \p,e^s \sin \p),
  \end{equation}
  where $M \in (0,\pi)$, $s \in [-M,M]$ and $\p = \p (s) = \sqrt{M^2-s^2}$. Now $\p'(s) = -s/\p(s)$ and therefore for $s \in (-M,M)$
  $$
    x'(s) = \frac{e^s}{\p(s)} \big( a(s),b(s) \big),
  $$
  where $a(s) = \p(s) \cos \p(s)+s \sin \p(s)$ and $b(s) = \p(s) \sin \p(s)-s \cos \p(s)$. Now $t(s) = \big( a(s),b(s) \big)$ is a tangent vector of $D$ for $s \in [-M,M]$. Equality $t(s) = 0$ is equivalent to $s^2 = -\p(s)^2$, which never holds. Since $t(s) \ne 0$ for all $s \in [-M,M]$ the angle $\alpha(s) = \arg t(s)$ is a continuous function on $(-M,M)$. We need to show that $\alpha(s)$ is strictly decreasing on $[-M,M]$.

  Since $\alpha (s) = \arctan \big( b(s)/a(s) \big)$ and $\arctan$ is strictly increasing, we need to show that $c(s) = b(s)/a(s)$ is strictly decreasing. By a straightforward computation
  \begin{equation}\label{dc}
    c'(s) = \frac{a(s)b'(s)-b(s)a'(s)}{a(s)^2} = -\frac{(1+s)M^2}{\p(s)a(s)^2}
  \end{equation}
  and the assertion follows.
\end{proof}

\begin{remark}
  The boundary $\partial D_\PP(1,M)$ is smooth since $\alpha(s)$ is continuous,
  $$
    t(M) = (0,-M) \quad \textnormal{and} \quad t(-M) = (0,M).
  $$
\end{remark}

By using the symmetry of the quasihyperbolic balls we can extend Theorem \ref{convexPP} to the case of punctured space.

\begin{lemma}\label{symmetric}
  Let the domain $G \subset \Rn$ be symmetric about a line $l$, $G \cap l \ne \emptyset$ and $G \cap L$ be strictly convex for any plane $L$ with $l \subset L$. Then $G$ is strictly convex.
\end{lemma}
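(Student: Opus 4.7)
The plan is to reduce both assertions to the two-dimensional cross-sections $G \cap L$ by exploiting the rotational symmetry. I read ``symmetric about a line'' in the sense appropriate to $n \ge 3$: invariance of $G$ under every rotation fixing $l$ pointwise (the case $n=2$ is trivial, as then $\R^2$ itself is the only plane through $l$). I would set up coordinates so that $l$ is the $x_n$-axis and the reference plane $L_0 = \{x_2 = \cdots = x_{n-1} = 0\}$ contains $l$; writing $p = (p', p_n) \in \Rn$ with $p' \in \R^{n-1}$, the distance from $l$ is $\|p'\|$, and rotational invariance means that membership of $p$ in $G$ depends only on $\|p'\|$ and $p_n$.

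First I would prove ordinary convexity. Given $x,y \in G$ and $t \in (0,1)$, set $z_t = (1-t)x + ty$, $a = \|x'\|$, $b = \|y'\|$, $c = \|z_t'\|$ and $h_z = (1-t)x_n + ty_n$. The triangle and reverse triangle inequalities give
$$
|(1-t)a - tb| \,\le\, c \,\le\, (1-t)a + tb.
$$
By rotational invariance, the ``cylindrical representatives'' $\tilde x = (a,0,\ldots,0,x_n)$, $\tilde y = (b,0,\ldots,0,y_n)$ and $\tilde y^{-} = (-b,0,\ldots,0,y_n)$ (a $\pi$-rotation of $\tilde y$) all lie in $G \cap L_0$. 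Convexity of $G \cap L_0$ puts the segments $[\tilde x, \tilde y]$ and $[\tilde x, \tilde y^{-}]$ in $G \cap L_0$, so their points at parameter $t$, namely $P_{\pm} = ((1-t)a \pm tb, 0, \ldots, 0, h_z)$, lie in $G \cap L_0$; a further application of convexity puts the horizontal segment $[P_-, P_+]$ in $G \cap L_0$ as well. The displayed inequality then places the representative $(c, 0, \ldots, 0, h_z)$ of $z_t$ on that segment, and rotating back yields $z_t \in G$.

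For strict convexity I would split on whether the points $x, y$ and the line $l$ are coplanar. In the coplanar case the segment $[x,y]$ lies in a single plane $L$ through $l$, and the open segment sits in the relative interior of $G \cap L$ in $L$ by the strict convexity hypothesis. In the non-coplanar case $x'$ and $y'$ are linearly independent, so both triangle inequalities above are strict; strict convexity of $G \cap L_0$ then places $P_{\pm}$, and hence the horizontal segment containing the representative of $z_t$, in the relative interior of $G \cap L_0$ in $L_0$. What remains in either case is to promote relative interior in a $2$-plane through $l$ to interior in $\Rn$.

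That ``thickening'' step is the main technical point, and I expect it to be the only part requiring real care. It is valid precisely because of the rotational symmetry: rotating a small planar disk contained in $G \cap L$ around $l$ sweeps out a genuine $n$-dimensional neighborhood of each of its points (a solid tubular neighborhood around points off $l$ and an ordinary round ball around points on $l$, once the disk radius is chosen small enough). Everything else is bookkeeping with the triangle inequality, the rotational invariance of $G$, and the planar strict-convexity hypothesis.
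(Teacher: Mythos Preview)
Your argument is correct and, in fact, more carefully worked out than the paper's own proof. Both arguments exploit the rotational symmetry to reduce the $n$-dimensional question to the planar slices, but the packaging differs. The paper introduces the \emph{radius function} $f(t)=d\big((t,0,\dots,0),\partial G\cap\{x_1=t\}\big)$ along $l$, observes that strict convexity of each $G\cap L$ forces $f$ to be concave on its support, and then concludes by noting that any segment $[x,y]$ lies in the convex hull of the two ``circles of latitude'' through $x$ and $y$, which is a frustum contained in $G$ by the concavity of $f$. You instead pick explicit planar representatives $\tilde x,\tilde y,\tilde y^{-}$ in a fixed reference plane $L_0$ and use the triangle inequality for $\|z_t'\|$ to trap the representative of $z_t$ on the segment $[P_-,P_+]$. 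The paper's version is shorter and more geometric, but it leaves the strict-convexity conclusion essentially implicit; your version is more explicit, handles strict convexity directly, and singles out the ``thickening'' step (rotating a small planar disk about $l$ to an $n$-dimensional neighbourhood) that the paper passes over in silence.

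One small imprecision worth tightening: in the non-coplanar case you assert that strict convexity of $G\cap L_0$ places \emph{both} $P_+$ and $P_-$ in the relative interior, but if $\|x'\|=\|y'\|$ and $x_n=y_n$ then $\tilde x=\tilde y$ and $P_+=\tilde x$ may lie on $\partial(G\cap L_0)$. Your conclusion is nonetheless intact: in that situation $\tilde x\ne\tilde y^{-}$ (otherwise $x'=y'=0$, contradicting non-coplanarity), so $P_-$ is in the relative interior, and since the strict triangle inequality gives $c<(1-t)a+tb$ the representative of $z_t$ lies on the half-open segment $[P_-,P_+)$, which is contained in the relative interior by the standard fact that the segment from an interior point to a closure point, minus the endpoint, stays interior.
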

\begin{proof}
  We may assume that the line $l$ is the first coordinate axis of $\Rn$ to simplify notation. Let us define function $f \colon \R \to [0,\infty)$ by
  $$
    f(x) = \left\{ \begin{array}{ll} d(x,z), & \textrm{if there exists }z=(x,z_2, \dots ,z_n) \in \partial G\\ 0, & \textrm{otherwise.} \end{array} \right.
  $$
  Since $G$ is symmetric about $l$ and $G \cap l \ne \emptyset$ there exists such $x_0,x_1 \in \R$ that $f[x_0,x_1] = [0,d]$ for $d < \infty$ and $f(x_0) = 0 = f(x_1)$. Since $G \cap L$ is convex the function $f$ is concave on $[x_0,x_1]$.

  Let $x,y \in G$, $x\ne y$ be arbitrary and denote $A_x = \{ z = (x_1,z_2, \dots ,z_n) \in G \colon d(z,l) = d(x,l) \}$ and $A_y = \{ z = (y_1,z_2, \dots ,z_n) \in G \colon d(z,l) = d(y,l) \}$. The line segment $[x,y]$ is contained in the closure of the convex hull of $A_x \cup A_y$, which is contained in $G$ by the concavity of $f$.
\end{proof}

\begin{corollary}\label{convexPS}
  For $0 < M \leq 1$ and $z \in \PS$ the quasihyperbolic ball $D_{\PS}(z,M)$ is strictly convex.
\end{corollary}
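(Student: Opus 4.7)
The plan is to deduce this from Theorem \ref{convexPP} (strict convexity in $\PP$) via the symmetry result Lemma \ref{symmetric}. Let $l$ be the line through $0$ and $z$; since $z \in \PS$ lies on $l$, we have $l \cap D_\PS(z,M) \ne \emptyset$. Formula (\ref{moeq}) expresses $k_\PS(z,y)$ solely in terms of $|z|$, $|y|$, and the angle between the rays $[0,z]$ and $[0,y]$, so $k_\PS$ is invariant under every rotation of $\Rn$ fixing $l$ pointwise. Consequently $D_\PS(z,M)$ is rotationally symmetric about $l$, in particular symmetric in the sense required by Lemma \ref{symmetric}.

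Next I would examine the planar cross-sections. Fix any plane $L$ with $l \subset L$. For $y \in L \setminus \{0\}$, the quantities $|z|$, $|y|$ and the angle between $[0,z]$ and $[0,y]$ are the same whether one views $y$ as a point of $\PS$ or of the 2-dimensional punctured plane $L \setminus \{0\}$; applying formula (\ref{moeq}) on both sides gives $k_\PS(z,y) = k_{L \setminus \{0\}}(z,y)$. Hence
$$
  D_\PS(z,M) \cap L = D_{L \setminus \{0\}}(z,M),
$$
and the right-hand side is strictly convex by Theorem \ref{convexPP} (identifying $L \setminus \{0\}$ with $\PP$).

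With symmetry about $l$ and strict convexity of every planar cross-section containing $l$, Lemma \ref{symmetric} applies verbatim and concludes that $D_\PS(z,M)$ is strictly convex in $\Rn$.

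The only real obstacle is the identification $D_\PS(z,M) \cap L = D_{L \setminus \{0\}}(z,M)$. A priori one only has the trivial inequality $k_\PS(z,y) \le k_{L \setminus \{0\}}(z,y)$ coming from the fact that $L \setminus \{0\} \subset \PS$ admits fewer competing curves; the reverse inequality would, without (\ref{moeq}), require showing that a quasihyperbolic geodesic from $z$ to $y \in L$ can be taken to lie in $L$. Fortunately formula (\ref{moeq}) gives equality immediately and removes this difficulty, so the argument reduces to the two invocations described above.
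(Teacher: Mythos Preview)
Your argument is correct and follows essentially the same route as the paper: use (\ref{moeq}) to establish rotational symmetry of $D_\PS(z,M)$ about the line through $0$ and $z$, identify each planar cross-section through that line with a quasihyperbolic disk in $\PP$ via (\ref{moeq}), apply Theorem~\ref{convexPP}, and conclude with Lemma~\ref{symmetric}. Your write-up is in fact more careful than the paper's, which leaves the identification $D_\PS(z,M)\cap L = D_{L\setminus\{0\}}(z,M)$ and the check $l\cap D_\PS(z,M)\ne\emptyset$ implicit.
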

\begin{proof}
  By (\ref{moeq}) the quasihyperbolic ball $D_{\PS}(x,M)$ is symmetric about the line that contains $x$ and $0$. By Lemma \ref{symmetric} and Theorem \ref{convexPP} $D_{\PS}(x,M)$ is strictly convex for $0 < M \le 1$.
\end{proof}

\section{Starlikeness of quasihyperbolic balls in punctured space}

In this section we will find the maximum value of the radius $M$ for which the quasihyperbolic ball $D_\PS(x,M)$ is starlike with respect to $x$. As in the previous section we will first consider the quasihyperbolic disks in the punctured plane and then extend the results to the punctured space.

Let us define a constant $\kappa$ as the solution of the equation
\begin{equation}\label{kappa}
    \cos \sqrt{p^2-1}+\sqrt{p^2-1}\sin \sqrt{p^2-1} = e^{-1}
\end{equation}
for $p \in [1,\pi]$. The proof of the next theorem shows that the equation (\ref{kappa}) has only one solution $\kappa$ on $[1,\pi]$ with numerical approximation
$$
  \kappa \approx 2.83297.
$$

\begin{remark}
  According to \cite{am} the number $\kappa$ was first introduced by P.T. Mocanu in 1960 \cite{mon}. Later V. Anisiu and P.T. Mocanu showed \cite[page 99]{am} that if $f$ is an analytic function in the unit disk, $f(0) = 0$ and
  $$
    \left| \frac{f''(z)}{f'(z)} \right| \le \kappa,
  $$
  then $f$ is starlike with respect to 0.
\end{remark}

\begin{theorem}\label{starlikeness}
  The quasihyperbolic disk $D_\PP(x,M)$ is strictly starlike with respect to $x$ for $0 < M \le \kappa$ and is not starlike with respect to $x$ for $M > \kappa$.
\end{theorem}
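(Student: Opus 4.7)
My plan reduces strict starlikeness to the positivity of a single scalar function on $[-M,M]$ and then exploits a fortunate algebraic collapse at its critical points. By the rotational and dilation symmetries of $D_\PP(\cdot,M)$ noted after (\ref{moeq}), we may assume $x=1$, and by the reflection symmetry of $D_\PP(1,M)$ about the real axis it suffices to argue on the upper boundary half. Using the parameterization $x(s)=(e^s\cos\p,e^s\sin\p)$, $\p=\sqrt{M^2-s^2}$, $s\in[-M,M]$, from Theorem~\ref{convexPP}, strict starlikeness with respect to $1$ is equivalent to the angle $\beta(s)=\arg(x(s)-(1,0))$ being strictly monotone (since $\beta(-M)=\pi$ and $\beta(M)=0$, this then forces each ray from $1$ to meet $\partial D_\PP(1,M)$ exactly once). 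Reusing the tangent vector $x'(s)=(e^s/\p)(a,b)$ from Theorem~\ref{convexPP} and simplifying $\beta'=(uv'-vu')/(u^2+v^2)$ yields
\begin{equation*}
  \beta'(s)=-\frac{e^s\,F(s,M)}{\p(s)\,|x(s)-(1,0)|^2},\qquad F(s,M):=se^s+\p\sin\p-s\cos\p,
\end{equation*}
so the task reduces to showing $F(s,M)\ge 0$ on $[-M,M]$ with only isolated zeros.

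A short computation using $\p'=-s/\p$ factors the derivative as
\begin{equation*}
  \partial_s F(s,M)=(1+s)\,G(s,M),\qquad G(s,M):=e^s-\cos\p-\frac{s}{\p}\sin\p,
\end{equation*}
so every interior critical point of $F(\cdot,M)$ is either $s=-1$ (active only when $M>1$) or a zero of $G$. The boundary values $F(\pm M,M)=\pm M(e^{\pm M}-1)$ are positive, so the minimum of $F$ on $[-M,M]$ is attained at one of these interior critical points.

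The principal obstacle I anticipated is ruling out competing minima at the zeros of $G$, but here a fortunate algebraic cancellation saves the day: if $G(s_0,M)=0$ and $\p_0=\sqrt{M^2-s_0^2}$, then $e^{s_0}=\cos\p_0+(s_0/\p_0)\sin\p_0$, and plugging this back into $F(s_0,M)$ collapses the expression to
\begin{equation*}
  F(s_0,M)=\frac{s_0^2+\p_0^2}{\p_0}\sin\p_0=\frac{M^2\sin\p_0}{\p_0},
\end{equation*}
which is strictly positive because $\p_0\in(0,\pi)$ whenever $M<\pi$. Hence for $M<\pi$ the only critical value of $F$ that can be nonpositive is $F(-1,M)$ itself.

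Direct evaluation gives $F(-1,M)=\cos q+q\sin q-e^{-1}$ with $q=\sqrt{M^2-1}$; writing $h(q):=\cos q+q\sin q$ one has $h'(q)=q\cos q$, so $h$ increases from $h(0)=1$ to $h(\pi/2)=\pi/2$ and then decreases to $h(\pi)=-1$, giving a unique crossing $h(q_*)=e^{-1}$ with $q_*\in(\pi/2,\pi)$; this is precisely equation (\ref{kappa}), and $\kappa=\sqrt{1+q_*^2}$. For $0<M\le\kappa$ we conclude $F\ge 0$ on $[-M,M]$ with the single isolated zero at $(s,M)=(-1,\kappa)$, so $\beta$ is strictly decreasing and $D_\PP(1,M)$ is strictly starlike with respect to $1$. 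For $M>\kappa$, $F(-1,M)<0$ yields $\beta'(-1)>0$, so $\beta$ fails to be monotone; the ray from $1$ through $x(-1)$ then meets the boundary in more than one point, the line segment from $1$ to the outermost of these exits $D_\PP(1,M)$, and starlikeness fails.
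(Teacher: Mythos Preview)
Your argument is correct and rests on the same geometric criterion as the paper's: strict starlikeness with respect to $1$ holds exactly when the tangent line to the upper boundary arc never passes through $1$, equivalently when the polar angle $\beta(s)=\arg(x(s)-1)$ is strictly monotone, and the critical parameter value is $s=-1$, leading to equation~(\ref{kappa}).

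Where the two differ is in execution. The paper observes that the tangent \emph{slope} $c(s)$ has its unique extremum at $s=-1$ (via $c'(s)=-(1+s)M^2/(\p a^2)$) and then asserts without further justification that the borderline case is when the tangent at $s=-1$ passes through $1$. You instead work directly with the scalar $F(s,M)$ governing $\beta'$, factor $\partial_sF=(1+s)G$, and use the identity $F|_{G=0}=M^2(\sin\p)/\p>0$ to eliminate all competing critical points; this cleanly explains \emph{why} only $s=-1$ matters, a step the paper leaves implicit. Your analysis of $h(q)=\cos q+q\sin q$ also makes the uniqueness of $\kappa$ transparent. One small omission you share with the paper: the case $M\ge\pi$ is not covered by the parameterization argument, but there $D_\PP(1,M)$ is not simply connected (cf.\ Remark~\ref{notround}) and hence not starlike.
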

\begin{proof}
  Because of symmetry we will consider $\partial D_\PP(x,M)$ only above the real axis and by the similarity it is sufficient to consider only the case $x=1$. By Theorem \ref{convexPP} we need to consider $M \in (1,\pi)$.

  Let us denote by $l(s)$ a tangent line of the upper half of $\partial D_\PP(1,M)$. The slope of the tangent line $l(s)$ is described by the function $c(s)$ defined in the proof of Theorem \ref{convexPP}. By (\ref{dc}) the function $c(s)$ is increasing on $[-M,-1]$ and decreasing on $[-1,M]$. We need to find $M$ such that $l(s)$, $s \in [-M,M]$, goes through point 1 exactly once. In other words, we need to find $M$ such that $l(-1)$ goes through 1.

  The tangent line $l(s)$ goes through 1 if and only if
  \begin{equation}\label{kappaeq}
    c(s) = \frac{x_2}{x_1-1},
  \end{equation}
  where $x_1 = e^s \cos \p(s)$ and $x_2 = e^s \sin \p(s)$. The equation (\ref{kappaeq}) in the special case $s = -1$ is equivalent to
  $$
    \frac{e \cos \sqrt{M^2-1}+e \sqrt{M^2-1} \sin \sqrt{M^2-1}-1}{(e-\cos \sqrt{M^2-1})(\sqrt{M^2-1} \cos \sqrt{M^2-1}-\sin \sqrt{M^2-1})}  = 0,
  $$
  which holds if and only if $M = \kappa$.

  We will finally show that $M = \kappa$ is the only solution of (\ref{kappa}) on $(1,\pi)$. We define function $h(x) = \cos x+x\sin x -e^{-1}$ and show that it has only one root on $(0,\sqrt{\pi^2-1})$. Since $h'(x) = x \cos x$, $h(0) 1-e^{-1} > 0$ and $h(\sqrt{\pi^2-1}) < h(11 \pi / 12) < 0$ the function $h$ has only one root on $(0,\sqrt{\pi^2-1})$ and the assertion follows.
\end{proof}

\begin{corollary}\label{kstarlikeness}
  The quasihyperbolic ball $D_{\PS}(x,M)$ is strictly starlike with respect to $x$ for $0 < M \le \kappa$ and is not starlike with respect to $x$ for $M > \kappa$.
\end{corollary}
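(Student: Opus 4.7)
The plan is to reduce Corollary \ref{kstarlikeness} to its planar counterpart Theorem \ref{starlikeness} by exploiting the rotational symmetry of $D_\PS(x,M)$ about the line $\ell$ through $0$ and $x$. The key reduction is the identity
\[
  D_\PS(x,M) \cap L = D_{L \setminus \{0\}}(x,M)
\]
for every $2$-plane $L$ with $\ell \subset L$. This follows from (\ref{moeq}): for $u,v \in L \setminus \{0\}$ the formula $k(u,v) = \sqrt{\p^2 + \log^2(|u|/|v|)}$ depends only on $|u|$, $|v|$, and the angle at $0$ between $[0,u)$ and $[0,v)$, and the geodesic realising it lies in the plane spanned by $0,u,v$ and hence in $L$, so $k_\PS(u,v) = k_{L \setminus \{0\}}(u,v)$. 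After identifying $L$ with $\R^2$, the right-hand side is a planar quasihyperbolic disk and Theorem \ref{starlikeness} applies directly.

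For the positive direction $0 < M \le \kappa$, I would take any $y \in D_\PS(x,M)$ and choose a $2$-plane $L$ containing $\ell \cup \{y\}$. Theorem \ref{starlikeness} yields $[x,y] \subset D_{L \setminus \{0\}}(x,M) \subset D_\PS(x,M)$, which is starlikeness. To upgrade to strict starlikeness, each ray from $x$ in $\Rn$ lies in some $2$-plane $L$ through $\ell$ (pick $L$ to contain $\ell$ and any second point of the ray, or any $L\supset \ell$ if the ray coincides with $\ell$); by Theorem \ref{starlikeness} the ray meets $\partial D_{L \setminus \{0\}}(x,M)$ at exactly one point, and rotational symmetry about $\ell$ identifies this set with $\partial D_\PS(x,M) \cap L$. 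Boundedness, also demanded by the definition of strict starlikeness, follows from the standard estimate $D_\PS(x,M) \subset B^n(x,(e^M-1)|x|)$ recorded just before Lemma \ref{sslemma}.

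For the negative direction $M > \kappa$, I would fix any $2$-plane $L$ through $\ell$. Theorem \ref{starlikeness} provides $y \in D_{L \setminus \{0\}}(x,M)$ and $w \in [x,y]$ with $w \notin D_{L \setminus \{0\}}(x,M)$. By the plane identity $y \in D_\PS(x,M)$ while $w \in L \setminus D_\PS(x,M)$, so $D_\PS(x,M)$ is not starlike with respect to $x$. The only nontrivial step in the whole argument is verifying the plane identity $D_\PS(x,M) \cap L = D_{L \setminus \{0\}}(x,M)$, and even that is a direct consequence of (\ref{moeq}); I do not anticipate any deeper obstacle, since once this identity is in hand everything reduces to the planar result already proved.
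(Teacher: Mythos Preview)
Your proof is correct and follows essentially the same route as the paper: both reduce to the planar Theorem~\ref{starlikeness} via the identity $D_\PS(x,M)\cap L = D_{L\setminus\{0\}}(x,M)$ for $2$-planes $L$ containing the line through $0$ and $x$, which is immediate from~(\ref{moeq}). Your write-up is in fact more complete than the paper's, which argues only the positive direction (by contradiction) and leaves the non-starlikeness for $M>\kappa$ and the boundedness requirement implicit.
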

\begin{proof}
  By Theorem \ref{starlikeness} the claim is true for $n = 2$. Let us assume $n > 2$ and choose $x \in \PS$ and $M \in (0,\kappa]$. Let us assume, on the contrary, that there exist $y \in \partial D_\PS(x,M)$ and $z \in (x,y)$ such that $z \in \partial D_\PS(x,M)$. Now $z \in \partial D_\PS(x,M) \cap \span (0,x,y)$ and therefore $D_\PP(x,M)$ is not strictly starlike with respect to $x$. This is a contradiction by Theorem \ref{starlikeness}.
\end{proof}

\begin{remark}
  Let us consider the starlikeness property of the quasihyperbolic disk $D_\PP(x,M)$ with respect to any point $z \in D_\PP(x,M)$. For $M > 1$ and $z = (e^{-M}+\e)x/|x|$, where $\e > 0$, we can choose $\e$ so small that $D_\PP(x,M)$ is not starlike with respect to $z$. On the other hand for $M < \lambda \approx 2.9648984$, where $\lambda$ is a solution of
  \begin{equation}\label{starshapedball}
    \cos \sqrt{p^2-1}+\sqrt{p^2-1}\sin \sqrt{p^2-1} = e^{-1-p},
  \end{equation}
  $D_\PP(x,M)$ is starlike with respect to $z = (e^M-\e)x/|x|$ for small enough $\e > 0$. This is also true for quasihyperbolic balls $D_\PS(x,M)$. The equation (\ref{starshapedball}) can be obtained by similar computations as in the proof of Theorem \ref{starlikeness}.
\end{remark}

\begin{remark}\label{notround}
  For $M \le \pi$ we note that
  $$
    \lim_{\p \to M} c(s) = -\infty \quad \textnormal{and} \quad \lim_{\p \to -M} c(s) = \infty
  $$
  and therefore $D_\PS(x,M)$ smooth. For $M > \pi$ the boundary $\partial D_\PS(x,M)$ is defined by (\ref{funcxs}) for $s \in [m,M]$, where $m = \max \{ t \in (-M,M) \colon \sin \sqrt{M^2-t^2} = 0 \}$. Therefore
  $$
    \lim_{\p \to M} c(s) = -\infty \quad \textnormal{and} \quad \lim_{\p \to m} c(s) = \frac{-m \cos \p(m)}{\p(m) \cos \p(m)} = -\frac{m}{\p(m)},
  $$
  where $|-m/\p(m)| < \infty$, and $D_\PS(x,M)$ is not smooth at $\big( e^m \sin \p(m),0 \big)$. Note that by (\ref{moeq}) $D_\PP(x,M)$ is not simply connected for $M > \pi$ and is simply connected for $M \in (0,\pi]$.
\end{remark}

\begin{proof}[Proof of Theorem \rm\ref{mainthm}]
  The claim is clear by Corollaries \ref{notconvexPS}, \ref{convexPS} and \ref{kstarlikeness}.
\end{proof}

The following lemma shows a property of the Euclidean radius of a quasihyperbolic ball.

\begin{lemma}
  Let  $M \in (0,\kappa]$, $z \in \PS$ and $x,y \in \partial D_\PS(z,M)$. Then $\measuredangle xz0 < \measuredangle yz0$ implies $|x-z| < |y-z|$.
\end{lemma}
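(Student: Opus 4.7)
The plan is to reduce the lemma to a planar statement via rotational symmetry, then verify two monotonicities along the explicit boundary parametrization (\ref{funcxs}). By the rotational symmetry of $k_\PS$ about the line through $0$ and $z$ and by the similarity of quasihyperbolic balls in $\PS$, I may rotate and rescale so that $z=1$ and so that $x$ and $y$ both lie in the closed upper half of one plane through $0$ and $z$. It then suffices to consider the planar situation with $z=1$ and $x,y$ in the closed upper half of $\partial D_\PP(1,M)$.

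Parametrize the upper half of $\partial D_\PP(1,M)$ by $w(s)=(e^s\cos\p, e^s\sin\p)$, $\p(s)=\sqrt{M^2-s^2}$, $s\in[-M,M]$, as in (\ref{funcxs}), and set $\theta(s)=\measuredangle w(s)z0\in[0,\pi]$ and $r(s)=|w(s)-z|$. I claim that both $\theta$ and $r$ are strictly increasing on $[-M,M]$. Granted this, $\measuredangle xz0<\measuredangle yz0$ forces $s_x<s_y$, hence $r(s_x)<r(s_y)$, and the lemma follows.

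Monotonicity of $\theta$ is essentially free: by Corollary \ref{kstarlikeness}, $D_\PP(z,M)$ is strictly starlike with respect to $z=1$, so each ray from $z$ meets the boundary in exactly one point. Hence $\theta\colon[-M,M]\to[0,\pi]$ is a continuous bijection with $\theta(-M)=0$ and $\theta(M)=\pi$, and therefore strictly increasing. Monotonicity of $r$ requires a computation: from $r(s)^2=e^{2s}-2e^s\cos\p+1$ and $\p'(s)=-s/\p$ one obtains $(r^2)'(s)=2e^s[\,e^s-\cos\p-(s/\p)\sin\p\,]$, and the task is to verify that the bracket is strictly positive on $(-M,M)$. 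For $s\ge 0$ this is routine from $e^s\ge 1+s$, $\sin\p\le\p$, and $\cos\p\le 1$.

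The main obstacle is the case $s<0$, where the signs conspire less favorably. Here I plan to split on the size of $\p$. When $\p\ge\pi/2$, the inequalities $\cos\p\le 0$ and $-(s/\p)\sin\p>0$ make the bracket exceed $e^s>0$ trivially. When $\p<\pi/2$, I apply the elementary inequality $\tan\p\ge\p$ (equivalently $\sin\p\ge\p\cos\p$) to bound the bracket below by $e^s-(1+s)\cos\p$; this is positive for $s\in[-1,0)$ because $e^s\ge 1+s\ge(1+s)\cos\p$, and for $s<-1$ because $(1+s)\cos\p<0<e^s$. The hypothesis $M\le\kappa<\pi$ enters only by ensuring $\p\in[0,\pi)$ throughout, so that $\sin\p\ge 0$.
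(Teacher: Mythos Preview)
Your argument is correct and follows the same overall strategy as the paper: reduce by symmetry to $z=1$ in the plane, and show that $r(s)^2=|x(s)-1|^2$ is strictly increasing on $(-M,M)$ by verifying that the bracket $e^s-\cos\p-(s/\p)\sin\p$ is positive. The treatment of $s\ge 0$ is identical. For $s<0$ the paper proceeds differently: it substitutes $t=-s$ and bounds $e^{-t}$, $\cos\p$, and $\sin\p/\p$ by truncated Taylor polynomials, reducing the claim to the polynomial inequality $12M^2-M^4-4M^2t+2M^2t^2-t^4>0$, which it then checks using $M<3$. Your case split on $\p\ge\pi/2$ versus $\p<\pi/2$, together with $\tan\p\ge\p$, is shorter and more transparent, and it needs only $M<\pi$ rather than the sharper numerical bound. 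You are also more explicit than the paper about why the angle $\theta(s)$ is monotone in $s$: the paper merely notes that strict starlikeness makes the angle determine the boundary point uniquely, without spelling out the continuous-bijection argument you give.
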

\begin{proof}
  Since $M \le \kappa$ the quasihyperbolic ball $D_\PS(z,M)$ is strictly starlike with respect to $z$ by Theorem \ref{kstarlikeness} and the angle $\measuredangle xz0$ determines the point $x$ uniquely. By symmetry and similarity it is sufficient to consider only the case $n = 2$ and $z = 1$. We will show that the function
  $$
    f(s) = |x(s)-1|^2
  $$
  is strictly increasing on $(-M,M)$, where $x(s)$ defined by (\ref{funcxs}). Now
  $$
    f(s) = |x(s)|^2+1-2|x(s)| \cos p(s) = e^{2s}+1-2 e^s \cos \p(s)
  $$
  for $s \in [-M,M]$ and
  $$
    f'(s) = 2 e^s \left( e^s-\cos \p(s)-\frac{s \sin \p(s)}{\p(s)} \right).
  $$

  If $s \in (0,M)$, then
  $$
    e^s-\cos \p(s)-\frac{s \sin \p(s)}{\p(s)} \ge e^s-\cos \p(s)-s \ge e^s-1-s > 0
  $$
  and $f'(s) > 0$.

  If $s \in [-M,0)$, then $e^s-\cos \p(s)-s \sin \p(s) / \p(s) > 0$ is equivalent to $e^{-t}-\cos \p(t)+t \sin \p(t) / \p(t) > 0$ for $t \in (0,M]$. Because $M < 3$, by elementary calculus
   {\small
  \begin{eqnarray*}
    e^{-t}-\cos \p(t)+\frac{t \sin \p(t)}{\p(t)} & \ge & \left( 1-t+\frac{t^2}{2}-\frac{t^3}{6} \right) - \left( 1-\frac{\p(t)^2}{2}+\frac{\p(t)^4}{24} \right) + \left( t-t\frac{\p(t)^2}{6} \right)\\
    & = & \frac{1}{24} \left( 12M^2-M^4-4M^2t+2M^2t^2-t^4 \right) > 0
  \end{eqnarray*}
  }
  and also $f'(s) > 0$. Therefore $f$ is strictly increasing and the assertion follows.
\end{proof}

Finally we pose an open problem concerning the uniqueness of short geodesics: are quasihyperbolic geodesics with length less than $\pi$ always unique?

\medskip
\emph{Acknowledgements.} This paper is part of the author's PhD thesis, currently written under the supervision of Prof. M. Vuorinen and supported by the Academy of Finland project 8107317.



\begin{thebibliography}{MAO}

\bibitem [1]{am} {\sc V. Anisiu, P.T. Mocanu}:
\emph{On a simple sufficient condition for starlikeness.} Mathematica (Cluj) {\bf 31} (54) (1989), 97--101.

\bibitem [2]{f} {\sc R.H. Fowler}:
\emph{The Elementary Differential Geometry of Plane Curves.} Cambridge University Press, 1929.

\bibitem [4]{go} {\sc F.W. Gehring, B.G. Osgood}:
\emph{Uniform domains and the quasi-hyperbolic metric.} J. Anal. Math. {\bf 36} (1979), 50--74.

\bibitem [5]{gp} {\sc F.W. Gehring, B.P. Palka}:
\emph{Quasiconformally homogeneous domains.} J. Anal. Math. {\bf 30} (1976), 172--199.

\bibitem [3]{h} {\sc P. Hästö}:
\emph{Isometries of the quasihyperbolic metric.} Pacific J. Math. {\bf 230}:2 (2007), 315--326.

\bibitem [6]{l} {\sc H. Lindén}:
\emph{Quasihyperbolic Geodesics and Uniformity in Elementary Domains.} Dissertation, University of Helsinki, 2005, Ann. Acad. Sci. Fenn. Math. Diss. 146 (2005).

\bibitem [7]{mo} {\sc G.J. Martin, B.G. Osgood}:
\emph{The quasihyperbolic metric and the associated estimates on the
hyperbolic metric.} J. Anal. Math. {\bf 47} (1986), 37--53.

\bibitem [8]{mv} {\sc O. Martio, J. Väisälä}:
\emph{Quasihyperbolic geodesics in convex domains II.} Manuscript, 2006.

\bibitem [9]{mon} {\sc P.T. Mocanu}:
\emph{Sur le rayon de stellarité des fonctions univalentes.} (Romanian) Acad. R. P. Romêne Fil. Cluj Stud. Cerc. Mat. {\bf 11}, 1960, 337--341.

\bibitem [10]{v1} {\sc J. Väisälä}:
\emph{Quasihyperbolic geometry of domains in Hilbert spaces.} Ann. Acad. Sci. Fenn. Math. 32 (2007), no. 2, 559--578.

\bibitem [11]{v2} {\sc J. Väisälä}:
Private communication December 2006.

\bibitem [12]{vu1} {\sc M. Vuorinen}:
\emph{Capacity densities and angular limits of quasiregular mappings.} Trans. Amer. Math. Soc. {\bf 263} (1981), 2, 343--354.

\bibitem [13]{vu2} {\sc M. Vuorinen}:
\emph{Conformal invariants and quasiregular mappings.} J. Anal. Math. {\bf 45} (1985), 69--115.

\bibitem [14]{vu3} {\sc M. Vuorinen}:
\emph{Conformal Geometry and Quasiregular Mappings.} Lecture Notes in Math. Vol. {\bf 1319}, Springer-Verlag, 1988.

\bibitem [15]{vu4} {\sc M. Vuorinen}:
\emph{Metrics and quasiregular mappings.} Proc. Int. Workshop on Quasiconformal Mappings and their Applications, IIT Madras, Dec 27, 2005--Jan 1, 2006, ed. by S. Ponnusamy, T. Sugawa and M. Vuorinen, \emph{Quasiconformal Mappings and their Applications}, Narosa Publishing House, 291--325, New Delhi, India, 2007.

\end{thebibliography}
\end{document}